\newcommand{\calI}{\mathcal{I}}
\newcommand{\expand}{\mathcal{E}}
\newcommand{\psdge}{\succcurlyeq}
\newcommand{\hess}[1]{{H_{J}}}
\newcommand{\eltwise}{\odot}
\newcommand{\normal}{\mathcal{N}}
\newcommand{\E}{\mathbb{E}}
\newcommand{\R}{\mathbb{R}}
\newcommand{\calC}{\mathcal{C}}
\newcommand{\pdiff}[2]{\frac{\partial #1}{\partial #2}}
\newcommand{\inr}[2]{\langle #1, #2 \rangle}
\newcommand{\grad}{\nabla}
\newcommand{\diffII}[3]{\ifthenelse{\equal{#2}{#3}}
{\frac{d^2 #1}{d #2^2}}
{\frac{d^2 #1}{d #2 d #3}}
}
\newcommand{\diff}[2]{\frac{d #1}{d #2}}
\newcommand{\pdiffII}[3]{\ifstrequal{#2}{#3}
{\frac{\partial^2 #1}{\partial #2^2}}
{\frac{\partial^2 #1}{\partial #2 \partial #3}}
}
\DeclareMathOperator{\diag}{diag}
\newtheorem{theorem}{Theorem}[section]
\newtheorem{lemma}[theorem]{Lemma}
\newtheorem{corollary}[theorem]{Corollary}
\theoremstyle{definition}
\theoremstyle{remark}
\newtheorem*{note*}{Note}
\begin{document}
\small

\title{\bf An interpolation proof of Ehrhard's inequality}

\author{Joe Neeman \ \ Grigoris  Paouris}

\maketitle

\begin{abstract}
  We prove Ehrhard's inequality using interpolation along the Ornstein-Uhlenbeck semi-group. 
  We also provide an improved Jensen inequality for Gaussian variables that might be
  of independent interest.
\end{abstract}

\bigskip

\medskip

\section{Introduction}

In \cite{Ehr}, A. Ehrhard proved the following Brunn-Minkowski like inequality for convex sets $A, B$ in $\mathbb R^{n}$: 
\begin{equation}
\label{Ehr}
\Phi^{-1} \left( \gamma_{n} ( \lambda A + (1-\lambda) B ) \right) \geq \lambda \Phi^{-1} (\gamma_{n} (A) ) + (1-\lambda) \Phi^{-1} (\gamma_{n} (B) ) , \ \lambda \in [0,1],
\end{equation}
where $ \gamma_{n}$ is the standard Gaussian measure in $\mathbb R^{n}$ (i.e. the measure with density $(2\pi )^{-n/2} e^{ - |x|^{2}/2} $) and $ \Phi$ is the Gaussian distribution function (i.e. $\Phi(x) = \gamma_{1} ( -\infty, x)$).  

This is a fundamental result of Gaussian space and it is known to have
numerous applications (see, e.g.,~\cite{L1}). Ehrhard's result was extended
by R. Lata\l{}a~\cite{L} to the case that one of the two sets is Borel and the other is
convex. Finally, C.\ Borell~\cite{Borell2} proved that it holds for all pairs of Borel sets.
Ehrhard's original proof for convex sets used a Gaussian symmetrization technique.
Borell used the heat semi-group and a maximal inequality in his proof,
which has since been further developed by Barthe and Huet~\cite{BartheHuet}; very
recently Ivanisvili and Volberg~\cite{IvanVol} developed this method into a general technique
for proving convolution inequalities. Another proof was recently found by van Handel~\cite{vanHandel}
using a stochastic variational principle.

In this work we will prove Ehrhard's inequality by constructing a quantity that is monotonic
along the Ornstein-Uhlenbeck semi-group. In
recent years this approach has been developed into a powerful tool to prove
Gaussian inequalities such as Gaussian hypercontractivity, the log-Sobolev
inequality, and isoperimetry~\cite{BGL}. There is no known
proof of Ehrhard inequality using these techniques and the purpose of
this note is to fill this gap.

An interpolation proof of the Lebesgue version of Ehrhard's inequality
(the Pr\'ekopa-Leindler inequality)
was presented recently in \cite{CDP}. This proof uses an
``improved reverse H\"older"
inequality for correlated Gaussian vectors that was established in
\cite{CDP}. A generalization of the aforementioned inequality also appeared
recently~\cite{Led,N1}. This inequality, while we call an ``improved
Jensen inequality'' for correlated Gaussian vectors, we present and actually
also extend in the present note. In \S2 we briefly discuss how this inequality
implies several known inequalities in probability, convexity and harmonic
analysis. Using a ``restricted'' version of this inequality (Theorem~\ref{thm:restricted-jensen}),
we will present a proof of Ehrhard's inequality. 

\smallskip

The paper is organized as follows: In \S2 we introduce the notation and basic
facts about the Ornstein-Uhlenbeck semi-group, and we present the proof of the
restricted, improved Jensen inequality. in \S3 we use Jensen inequality to
provide a new proof of Pr\'ekopa-Leindler inequality. We will use the main
ideas of this proof as a guideline for our proof of Ehrhard's inequality that
we present in \S4.

\section{An ``improved Jensen" inequality}

Fix a positive semi-definite $D \times D$ matrix $A$, and let $X \sim \normal(0, A)$.
For $t \ge 0$, we define the operator $P_t^A$ on $L_1(\R^{D}, \gamma_A)$ by
\[
  (P_t^A f)(x) = \E f(e^{-t} x + \sqrt{1-e^{-2t}} X).
\]
We will use the following well-known (and easily checked) facts:
\begin{itemize}
  \item the measure $\gamma_A$ is stationary for $P_t^A$;
  \item for any $s, t \ge 0$, $P_s^A P_t^A = P_{s + t}^A$;
  \item if $f$ is a continuous function having limits at infinity then
    $P_s^A f$ converges uniformly to $P_t^A f$ as $s \to t$.
\end{itemize}

We will heavily use the fact that $P_t^A$ commutes in a nice way with the composition of
smooth functions: let $\Psi: \R^k \to \R$ be a bounded $\calC^2$ function. For
any bounded, measurable $f = (f_1, \dots, f_k): \R^D \to \R^k$, any $x \in \R^D$
and any $0 < s < t$, $P_{t-s}^A \Psi(P_s^A f(x))$ is differentiable in $s$ and satisfies
\begin{equation}\label{eq:commutation}
  \pdiff{}{s} P_{t-s}^A \Psi(P_s^A f)
  = -P_{t-s}^A \sum_{i,j = 1}^k \partial_i \partial_j \Psi(f)
  \inr{\grad P_s^A f_i}{A \grad P_s^A f_j}.
\end{equation}

Suppose that $D = \sum_{i=1}^k d_i$, where $d_i \ge 1$ are integers.
We decompose $\R^D$ as $\prod_{i=1}^k \R^{d_i}$ and write
$\Pi_i$ for the projection on the $i$th component.
Given a $k \times k$ matrix $M$, write $\expand_{d_1, \dots, d_k} (M)$
for the $D \times D$ matrix whose $i,j$ entry is
$M_{k,\ell}$ if $\sum_{a < k} d_a < i \le \sum_{a \le k} d_a$
and $\sum_{b < \ell} d_b < j \le \sum_{b \le \ell} d_b$; that is, each entry
$M_{k,\ell}$ of $M$ is expanded into a $d_k \times d_\ell$ block.
We write `$\eltwise$' for the element-wise product of matrices,
`$\psdge$' for the positive semi-definite matrix ordering,
and $\hess J$ for the Hessian matrix of the function $J$.

Our starting point in this note is the following inequality,
which may be seen as an improved
Jensen inequality for correlated Gaussian variables.

\begin{theorem}\label{thm:jensen}
  Let $\Omega_1, \dots, \Omega_k$ be open intervals,
  and let $\Omega = \prod_{i=1}^k \Omega_i$.
  Take $X \sim \gamma_A$ and write $X_i = \Pi_i X$.
  For a bounded, $\calC^2$ function $J: \Omega \to \R$, the following are
  equivalent:
  \begin{enumerate}[label=(\ref{thm:jensen}.\alph*)]
    \item for every $x \in \Omega$, $A \eltwise \expand_{d_1, \dots, d_k}(\hess{J}(x)) \psdge 0$ \label{it:jensen-hess}
    \item for every $k$-tuple of measurable functions $f_i: \R^{d_i} \to \Omega_i$,
      \begin{equation}\label{eq:jensen}
        \E J(f_1(X_1), \dots, f_k(X_k))
          \ge J(\E f_1(X_1), \dots, \E f_k(X_k)).
        \end{equation}
      \label{it:jensen-ineq}
  \end{enumerate}
\end{theorem}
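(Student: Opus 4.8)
The plan is to prove both implications via the interpolation identity~\eqref{eq:commutation}. For the direction \ref{it:jensen-hess}~$\Rightarrow$~\ref{it:jensen-ineq}, I would fix bounded measurable $f_i : \R^{d_i} \to \Omega_i$ (the reduction to bounded $f_i$ with values in a compact subinterval, and then a limiting argument, being routine) and assemble $f = (f_1, \dots, f_k) : \R^D \to \R^k$, where $f_i$ is viewed as a function on $\R^D$ depending only on the $i$th block of coordinates. The key point is that because $f_i$ depends only on $X_i$, the Ornstein--Uhlenbeck action $P_s^A$ acting on $f_i$ reduces to the action of the block semigroup on $\R^{d_i}$, and in particular $\grad P_s^A f_i$ is supported on the $i$th block of coordinates. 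Hence in the quadratic form $\inr{\grad P_s^A f_i}{A \grad P_s^A f_j}$ only the $(i,j)$ block of $A$ appears, paired with gradients living in blocks $i$ and $j$; summing over $i,j$ against $\partial_i\partial_j J(P_s^A f)$ produces exactly a quadratic form governed by the matrix $A \eltwise \expand_{d_1,\dots,d_k}(\hess J(P_s^A f))$ evaluated on the vector $(\grad P_s^A f_1, \dots, \grad P_s^A f_k) \in \R^D$. I would therefore define
\[
  \phi(s) = P_{t-s}^A J(P_s^A f)(0), \qquad 0 \le s \le t,
\]
so that $\phi(0) = J(\E f_1(X_1), \dots, \E f_k(X_k))$ (using $P_t^A f_i(0) = \E f_i(X_i)$ and $P_t^A$ applied at $0$ is just expectation against $\gamma_A$) and $\phi(t) = \E J(f_1(X_1), \dots, f_k(X_k))$. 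By~\eqref{eq:commutation} with $\Psi = J$,
\[
  \phi'(s) = -P_{t-s}^A \sum_{i,j=1}^k \partial_i \partial_j J(P_s^A f)\, \inr{\grad P_s^A f_i}{A \grad P_s^A f_j},
\]
and by the block-support observation the inner sum equals $\inr{v_s}{\bigl(A \eltwise \expand_{d_1,\dots,d_k}(\hess J(P_s^A f))\bigr) v_s}$ for an appropriate $v_s \in \R^D$; hypothesis~\ref{it:jensen-hess} makes this $\ge 0$ pointwise, and $P_{t-s}^A$ preserves nonnegativity, so $\phi'(s) \le 0$. Thus $\phi(t) \le \phi(0)$, which is~\eqref{eq:jensen}. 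One wrinkle: $J$ must be extended from $\Omega$ to a bounded $\calC^2$ function on all of $\R^k$ (with controlled Hessian) so that $\Psi = J$ is legitimate in~\eqref{eq:commutation} — but since $P_s^A f$ always takes values in $\Omega$ (each $P_s^A f_i$ is an average of values in the interval $\Omega_i$), the values of the extension outside $\Omega$ never enter, so any smooth bounded extension works.

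For the converse \ref{it:jensen-ineq}~$\Rightarrow$~\ref{it:jensen-hess}, I would argue by contradiction: if $A \eltwise \expand_{d_1,\dots,d_k}(\hess J(x_0)) \not\psdge 0$ at some $x_0 \in \Omega$, pick a vector $w \in \R^D$ with $\inr{w}{(A \eltwise \expand_{d_1,\dots,d_k}(\hess J(x_0)))w} < 0$, write $w = (w_1, \dots, w_k)$ with $w_i \in \R^{d_i}$, and take $f_i$ to be a small affine perturbation, $f_i(y) = x_{0,i} + \varepsilon \inr{w_i}{y}$ on $\R^{d_i}$, truncated to stay inside $\Omega_i$ and with mean corrected so that $\E f_i(X_i) = x_{0,i}$. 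A second-order Taylor expansion of $J$ around $x_0$ gives $\E J(f_1(X_1), \dots, f_k(X_k)) - J(x_0) = \tfrac{\varepsilon^2}{2}\sum_{i,j}\partial_i\partial_j J(x_0)\,\E[\inr{w_i}{X_i}\inr{w_j}{X_j}] + o(\varepsilon^2)$, and since $\E[\inr{w_i}{X_i}\inr{w_j}{X_j}] = \inr{w_i}{A_{ij} w_j}$ (the $(i,j)$ block of $A$), the leading term is precisely $\tfrac{\varepsilon^2}{2}\inr{w}{(A \eltwise \expand_{d_1,\dots,d_k}(\hess J(x_0)))w} < 0$, violating~\eqref{eq:jensen} for small $\varepsilon$.

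The main obstacle is the bookkeeping in the first direction: verifying carefully that $\inr{\grad P_s^A f_i}{A \grad P_s^A f_j}$, after exploiting the block structure of the coordinates and the fact that each $f_i$ depends on a single block, collapses exactly to the Schur-product matrix $A \eltwise \expand_{d_1,\dots,d_k}(\hess J)$ rather than something larger. Concretely one must check that $\grad P_s^A f_i$ has nonzero entries only in block $i$ (which follows because $P_s^A$ respects the product structure of $\R^D$ and $\gamma_A$'s marginals, so differentiation in coordinates outside block $i$ kills $P_s^A f_i$), and then that the double sum reorganizes into $v_s^\top (A \eltwise \expand(\hess J)) v_s$ with $v_s$ the concatenation of the blockwise gradients; this is a direct but slightly fiddly index computation. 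The analytic regularity points (differentiability of $\phi$, legitimacy of truncation and limiting arguments, smooth bounded extension of $J$) are standard given the three listed facts about $P_t^A$ and~\eqref{eq:commutation}, so I would treat them briefly.
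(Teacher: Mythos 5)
Your overall strategy is the same as the paper's (which proves the more general Theorem~\ref{thm:restricted-jensen} and notes that Theorem~\ref{thm:jensen} is the special case of a large threshold): interpolate $s \mapsto P_{t-s}^A J(P_s^A f)$, use the commutation formula~\eqref{eq:commutation} plus the fact that $\grad P_s^A f_i$ is supported on the $i$th block to collapse the double sum to the quadratic form of $A \eltwise \expand_{d_1,\dots,d_k}(\hess{J})$, and for the converse use a truncated linear perturbation and a second-order Taylor expansion. The converse as you wrote it matches the paper's argument essentially verbatim (the paper uses an odd truncation $\psi$ so the mean is automatically $y_i$, in place of your mean correction), and your block-structure bookkeeping and the remark that the extension of $J$ outside $\Omega$ is never evaluated are both fine.

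However, in the forward direction your endpoint identifications are swapped, and as written they make the argument conclude the reverse inequality. With $\phi(s) = P_{t-s}^A J(P_s^A f)(0)$ one has $\phi(0) = P_t^A\bigl[J(f)\bigr](0) = \E\, J\bigl(f(\sqrt{1-e^{-2t}}\,X)\bigr)$ and $\phi(t) = J\bigl(P_t^A f(0)\bigr) = J\bigl(\E f(\sqrt{1-e^{-2t}}\,X)\bigr)$ --- the opposite of what you state, and moreover neither endpoint equals the desired quantity at finite $t$, since $P_t^A$ evaluated at $0$ averages against $\gamma_{(1-e^{-2t})A}$, not against $\gamma_A$. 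Under your labels, ``$\phi(t) \le \phi(0)$'' reads $\E J(f) \le J(\E f)$, which is not~\eqref{eq:jensen}. The repair is exactly what the paper does: monotonicity gives $G_{t,t}(0) \le G_{0,t}(0)$, i.e.\ $J\bigl(\E f(\sqrt{1-e^{-2t}}X)\bigr) \le \E J\bigl(f(\sqrt{1-e^{-2t}}X)\bigr)$, and one then sends $t \to \infty$ (boundedness and continuity of $J$ plus dominated convergence) to obtain $J(\E f_1,\dots,\E f_k) \le \E J(f_1(X_1),\dots,f_k(X_k))$. With that correction your proof coincides with the paper's.
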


We remark that the restriction that $J$ be bounded can often be lifted.
For example, if $J$ is a continuous but unbounded function then one can still
apply Theorem~\ref{thm:jensen} on bounded domains $\Omega_i' \subset \Omega_i$.
If $J$ is sufficiently nice (e.g.\ monotonic, or bounded above) then one
can take a limit as $\Omega_i'$ exhausts $\Omega_i$ (e.g. using
the monotone convergence theorem, or Fatou's lemma).

As we have already mentioned, Theorem~\ref{thm:jensen} is known to have many consequences.
However, we do not know how to obtain Ehrhard's inequality using only Theorem~\ref{thm:jensen};
we will first need to extend Theorem~\ref{thm:jensen} in a few ways.
To motivate our first extension, note that the usual Jensen inequality on $\R$ extends easily to the
case where some function is convex only on a sub-level set. To be more precise, take
a function $\psi: \R \to \R$ and the set $B = \{x \in \R^d: \psi(x) < 0\}$. If
$B$ is connected and $\psi$ is convex when restricted to $B$, then one
easily finds that $\E \psi(X) \ge \psi(\E X)$ for any random vector
supported on $B$. A similar modification may be made to Theorem~\ref{thm:jensen}.

\begin{theorem}\label{thm:restricted-jensen}
  Take the notation and assumptions of Theorem~\ref{thm:jensen}, and assume in
  addition that $\{x \in \Omega: J(x) < 0\}$ is connected. Then the 
  following are equivalent:
  \begin{enumerate}[label=(\ref{thm:restricted-jensen}.\alph*)]
    \item for every $x \in \Omega$ such that $J(x) < 0$, $A \eltwise \expand_{d_1, \dots, d_k}(\hess{J}(x)) \psdge 0$
      \label{it:restricted-jensen-hess}
    \item for every $k$-tuple of measurable functions $f_i: \R^{d_i} \to \Omega_i$ that $\gamma_A$-a.s.\ satisfy
      $J(f_1, \dots, f_k) < 0$,
      \[
        \E J(f_1(X_1), \dots, f_k(X_k))
          \ge J(\E f_1(X_1), \dots, \E f_k(X_k)).
      \]
      \label{it:restricted-jensen-ineq}
  \end{enumerate}
\end{theorem}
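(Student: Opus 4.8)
The plan is to prove the substantive implication \ref{it:restricted-jensen-hess}~$\Rightarrow$~\ref{it:restricted-jensen-ineq} by interpolation along the Ornstein--Uhlenbeck semigroup --- the same mechanism that underlies Theorem~\ref{thm:jensen} --- but now carrying along the extra bookkeeping of whether we have stayed inside the region $\{J<0\}$; the reverse implication will come from a short local perturbation argument. At the outset I would make the usual reductions: replace each $f_i$ by a bounded smooth function and each $\Omega_i$ by a bounded sub-interval (truncating, then exhausting $\Omega_i$ and appealing to Fatou/monotone convergence, exactly as in the remark after Theorem~\ref{thm:jensen}), so that $J$ may be taken bounded and $\calC^2$ and $J\circ(f_1,\dots,f_k)$ bounded; and if $A$ is degenerate, restrict attention to $\operatorname{supp}\gamma_A$.

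For \ref{it:restricted-jensen-ineq}~$\Rightarrow$~\ref{it:restricted-jensen-hess}: given $x\in\Omega$ with $J(x)<0$, continuity of $J$ furnishes a product box $\prod_i(x_i-\varepsilon,x_i+\varepsilon)\subseteq\{J<0\}$. Taking $f_i(X_i)=x_i+\delta\psi_i(X_i)$ with $\psi_i$ bounded, mean zero, and of small enough range to keep $(f_1,\dots,f_k)$ inside this box (hence $J(f_1,\dots,f_k)<0$ a.s.), a second-order Taylor expansion gives
\[
  \E J(f_1(X_1),\dots,f_k(X_k))-J(\E f_1(X_1),\dots,\E f_k(X_k))
  = \tfrac{\delta^2}{2}\sum_{i,j}\partial_i\partial_j J(x)\,\operatorname{Cov}\bigl(\psi_i(X_i),\psi_j(X_j)\bigr)+o(\delta^2).
\]
Choosing the $\psi_i$ to be truncations of linear forms $\langle a_i,\cdot\rangle$ and letting the truncation level grow makes $\operatorname{Cov}(\psi_i(X_i),\psi_j(X_j))$ converge to $a_i^\top A_{ij}a_j$, where $A_{ij}$ is the $d_i\times d_j$ block of $A$; nonnegativity of the left-hand side for all choices of $a_i$ then forces $w^\top\bigl(A\eltwise\expand_{d_1,\dots,d_k}(\hess{J}(x))\bigr)w\ge 0$ for $w=(a_1,\dots,a_k)$, which is \ref{it:restricted-jensen-hess}. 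This is exactly the computation behind Theorem~\ref{thm:jensen}; the only new ingredient is the box, which respects the constraint $J<0$.

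For the main implication, set $F\colon\R^D\to\R^k$ by $F_i(x)=f_i(\Pi_i x)$. Since $\Pi_i X\sim\gamma_{A_{ii}}$, the evolved coordinate $P_s^A F_i$ again depends only on $\Pi_i x$, so $\grad P_s^A F_i$ is supported on the $i$th block, whence $\inr{\grad P_s^A F_i}{A\grad P_s^A F_j}=(\grad_i P_s^A F_i)^\top A_{ij}(\grad_j P_s^A F_j)$, and, writing $v(x)=(\grad_i P_s^A F_i(x))_{i=1}^k\in\R^D$ and $y=P_s^A F(x)$, the quadratic form $\sum_{i,j}\partial_i\partial_j J(y)\inr{\grad P_s^A F_i}{A\grad P_s^A F_j}$ equals $v(x)^\top\bigl(A\eltwise\expand_{d_1,\dots,d_k}(\hess{J}(y))\bigr)v(x)$. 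Now fix $x_0\in\operatorname{supp}\gamma_A$ and $t>0$ and set $\Lambda(s)=\bigl(P_{t-s}^A J(P_s^A F)\bigr)(x_0)$ for $s\in[0,t]$. Then $\Lambda(0)=P_t^A J(F)(x_0)\to\E J(f_1(X_1),\dots,f_k(X_k))$ and $\Lambda(t)=J(P_t^A F(x_0))\to J(\E f_1(X_1),\dots,\E f_k(X_k))$ as $t\to\infty$, so it suffices to show $\Lambda$ is non-increasing on $[0,t]$; by the commutation identity \eqref{eq:commutation} applied with $\Psi=J$ and the previous sentence,
\[
  \Lambda'(s)=-\,P_{t-s}^A\Bigl[v^\top\bigl(A\eltwise\expand_{d_1,\dots,d_k}(\hess{J}(P_s^A F))\bigr)v\Bigr](x_0),
\]
which is $\le 0$ as soon as $J(P_s^A F(z))\le 0$ for $\gamma_A$-almost every $z$ and every $s\in[0,t]$, since then \ref{it:restricted-jensen-hess} --- extended by continuity of $\hess{J}$ to $\overline{\{J<0\}}$ --- makes the bracketed integrand nonnegative.

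Establishing this persistence of $\{J\le 0\}$ along the flow is the crux, and where I expect the real difficulty to lie: as $\{J<0\}$ is only assumed connected, not convex, the Gaussian average $P_s^A F(z)$ of points of $\{J<0\}$ has no a priori reason to remain in $\{J\le 0\}$. The plan is a continuity argument in $s$ of maximum-principle type. Writing $u(s,z)=J(P_s^A F(z))$, the chain rule for the Ornstein--Uhlenbeck generator $\mathcal L^A$ (for which $P_s^A=e^{s\mathcal L^A}$) yields $\partial_s u=\mathcal L^A u-Q(s,z)$ with $Q(s,z)=v(z)^\top\bigl(A\eltwise\expand_{d_1,\dots,d_k}(\hess{J}(P_s^A F(z)))\bigr)v(z)$, and by \ref{it:restricted-jensen-hess} (again extended to $\overline{\{J<0\}}$) one has $Q\ge 0$ wherever $u\le 0$; thus $u$ is a subsolution of $\partial_s u=\mathcal L^A u$ on the region $\{u<0\}$. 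Since $u(0,\cdot)<0$ a.e.\ and, for $s>0$, $u(s,\cdot)$ is smooth and (by boundedness of the $f_i$) has relatively compact range, a parabolic maximum principle --- using connectedness of $\{J<0\}$ to prevent $u$ from touching $0$ from below while remaining negative beforehand --- shows that $\{s\ge 0:u(s,\cdot)\le 0\ \gamma_A\text{-a.e.}\}=[0,\infty)$, which finishes the argument. The technical burden is in making this maximum principle rigorous with no spatial boundary and with $A$ possibly degenerate; I expect to handle this by running the comparison on the sublevel sets $\{u<-\varepsilon\}$ and letting $\varepsilon\downarrow 0$, or by adding a confining truncation and passing to the limit.
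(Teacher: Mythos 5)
Your architecture is the paper's: the interpolation $\Lambda(s)=P^A_{t-s}J(P^A_sF)$, the derivative via~\eqref{eq:commutation} together with the observation that $\grad P^A_sF_i$ lives in the $i$th block, and the perturbation argument for \ref{it:restricted-jensen-ineq}$\Rightarrow$\ref{it:restricted-jensen-hess} all match, and those parts are sound. The genuine gap is exactly at the point you flag as the crux: the persistence of $\{J\le 0\}$ along the flow is planned, not proved, and the parabolic-maximum-principle route you propose has a concrete obstruction. The function $u(s,\cdot)$ lives on all of $\R^D$, its supremum need not be attained, so there is no first touching point to argue at; $u$ is a subsolution only on the moving region $\{u\le 0\}$; for a general measurable $f$ there is no quantitative margin at $s=0$, so $\sup_z u(s,z)$ can creep up to $0$ without ever being attained, and neither of your suggested fixes (comparison on the spatial sublevel sets $\{u<-\varepsilon\}$, or a confining truncation) addresses this, because the missing control is uniform in space rather than a spatial localization. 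Note also that connectedness of $\{J<0\}$ plays no role in the maximum principle as you invoke it; where it is genuinely needed is in the approximation step you dispose of in one line: replacing a measurable $f$ with $J(f)<0$ a.s.\ by nicer functions \emph{while preserving that constraint} is nontrivial, and that is precisely where the paper uses connectedness.

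For comparison, the paper closes the crux without any maximum principle: using connectedness it reduces to $f$ continuous, vanishing at infinity, and satisfying the uniform margin $J(f)\le-\epsilon$; then $s\mapsto P^A_s g$ is uniformly continuous in sup norm, so there is $\delta>0$ with $|G_{s,s}(x)-G_{r,r}(x)|<\epsilon$ for all $x$ whenever $|s-r|\le\delta$. One then bootstraps in steps of length $\delta$: if $G_{r,r}\le-\epsilon$ pointwise, the uniform modulus gives $G_{s,s}<0$ on $[r,r+\delta]$, hence $\pdiff{}{s}G_{s,t}\le 0$ there by \ref{it:restricted-jensen-hess}; combined with positivity of $P^A$ (which gives $G_{r,s}=P^A_{s-r}G_{r,r}\le-\epsilon$), monotonicity restores the full margin $G_{s,s}\le G_{r,s}\le-\epsilon$, and induction propagates it to all $s\ge 0$. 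The three ingredients your sketch lacks are exactly the margin imposed on the data, the uniform-in-$x$ continuity in $s$ (which is why the approximation must produce continuous functions with limits at infinity), and the positivity of the semigroup used to restore the margin; if you add these, your Phragm\'en--Lindel\"of-style plan becomes unnecessary and you will essentially have reproduced the paper's bootstrap.
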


Note that the threshold of zero in the conditions $J(x) < 0$
and $J(f_1, \dots, f_k) \le 0$ is arbitrary, since we may apply the
theorem to the function $J(\cdot) - a$ for any $a \in \R$. Of course,
taking $a$ sufficiently large recovers Theorem~\ref{thm:jensen}.

\begin{proof}
  Suppose that~\ref{it:restricted-jensen-hess} holds.
  By standard approximation arguments, it suffices to
  prove~\ref{it:restricted-jensen-ineq} for a more restricted class of
  functions $f$. Indeed, let $F$ be the set of measurable
  $f = (f_1, \dots, f_k)$ satisfying $J(f) < 0$ $\gamma_A$-a.s.\ and
  let $F_\epsilon \subset F$ be those functions that are
  continuous, vanish at infinity,
  and satisfy $J(f) \le -\epsilon$ $\gamma_A$-a.s. Now, every $f \in F$
  can be approximated pointwise by a sequence $f^{(n)} \in F_{1/n}$
  (here we are using the fact that $\{x: J(x) < 0\}$ is connected);
  hence, it suffices to prove~\ref{it:restricted-jensen-ineq} for
  $f \in F_\epsilon$, where $\epsilon > 0$ is arbitrarily small.
  From now on, fix $\epsilon > 0$ and fix $f = (f_1, \dots, f_k) \in
  F_\epsilon$.

  Recalling that $\Pi_i : \R^{d_1} \times \cdots \times \R^{d_k} \to \R^{d_i}$
  is the projection onto the $i$th block of coordinates, define
  $g_i = f_i \circ \Pi_i$ and $G_{s,t}(x) = P_{t-s}^A J(P_s^A g(x))$.
  Since $f \in F_\epsilon$, we have $G_{0,0}(x) \le -\epsilon$
  for every $x \in \R^D$. Moreover, since $f$ is continuous and
  vanishes at infinity, $P_s^A g \to g$ uniformly as $s \to 0$.
  Since $g$ is bounded, $J$ is uniformly continuous on the range of $g$
  and so there exists $\delta > 0$ such that
  $|G_{s,s}(x) - G_{r,r}(x)| < \epsilon$
  for every $x \in R^D$ and every $|s - r| \le \delta$.

  Now, fix $r \ge 0$ and assume that $G_{r,r} \le -\epsilon$ pointwise;
  by the previous paragraph, $G_{s,s} < 0$ pointwise for every
  $r \le s \le r + \delta$. Now we apply the commutation formula~\eqref{eq:commutation}:
  with $B_s = B_s(x) = A \eltwise \expand_{d_1, \dots, d_k}(H_J(P_s^A g))$, we have
  \[
    \pdiff{}{s} G_{s,t} = - P_{t-s}^A \sum_{{i,j}=1}^k
    \inr{\grad P_s^A g_i}{B \grad P_s^A g_j}
  \]
  (here, we have used the observation that $P_s^A g_i(x)$ depends only on $\Pi_i x$,
  and so $\grad P_s^A g_i$ is zero outside the $i$th block of coordinates).
  The assumption~\ref{it:restricted-jensen-hess} implies that $B_s$ is
  positive semi-definite whenever $G_{s,s} < 0$; since $G_{s,s} < 0$
  for every $s \in [r, r + \delta]$, we see that for such $s$,
  $\pdiff{}{s} G_{s,r + \delta} \le 0$ pointwise. Since
  $G_{s,r+\delta}$ is continuous in $s$ and $G_{r,r} \le -\epsilon$,
  it follows that $G_{s,s} \le -\epsilon$ pointwise for all
  $s \in [r, r+\delta]$.

  Next, note that $r=0$ satisfies the assumption $G_{r,r} \le -\epsilon$
  of the previous paragraph.
  By induction, it follows that $G_{r,r} \le -\epsilon$ pointwise for all
  $r \ge 0$. Hence, the matrix $B_s$ is positive semi-definite for all
  $s \ge 0$ and $x \in \R^D$, which implies that $G_{s,t}(x)$ is
  non-increasing in $s$ for all $t \ge s$ and $x \in \R^D$. Hence,
  \[
    \E J(f_1(X_1), \dots, f_k(X_k)) = \lim_{t \to \infty} G_{0,t}(0)
    \ge \lim_{t \to \infty} G_{t,t}(0) = J(\E f_1, \dots, \E f_k).
  \]
  This completes the proof of~\ref{it:restricted-jensen-ineq}.

  Now suppose that~\ref{it:restricted-jensen-ineq} holds.
  Choose some $v \in \R^D$ and some $y \in \Omega$ with $J(y) < 0$;
  to prove~\ref{it:restricted-jensen-hess}, it is enough to show that
  \begin{equation}\label{eq:restricted-jensen-part-2-goal}
    v^T (A \eltwise \expand_{d_1, \dots, d_k}(H_J(y))) v \ge 0.
  \end{equation}
  Since $\Omega$ is open, there is some $\delta > 0$ such that $y + z \in \Omega$
  whenever $\max_i |z_i| \le \delta$.
  For this $\delta$, define $\psi: \R \to \R$ by
  \[
    \psi(t) = \max\{-\delta, \min\{\delta, t\}\}.
  \]
  For $\epsilon > 0$, define $f_{i, \epsilon}: \R^{d_i} \to \Omega_i$
  by
  \[
    f_{i,\epsilon}(x) = y_i + \psi(\epsilon \inr{x}{\Pi_i v}).
  \]
  By~\ref{it:restricted-jensen-ineq},
  \[
    \E J(f_{1,\epsilon}(X_1), \dots, f_{k,\epsilon}(X_k))
    \ge J(\E f_{1,\epsilon}(X_1), \dots, \E f_{k,\epsilon}(X_k)).
  \]
  Since $\psi$ is odd, $\E f_{i,\epsilon}(X_i) = y_i$ for all $\epsilon > 0$; hence,
  \begin{equation}\label{eq:restricted-jensen-part-2-assumption}
    \E J(f_{1,\epsilon}(X_1), \dots, f_{k,\epsilon}(X_k)) \ge J(y).
  \end{equation}
  Taylor's theorem implies that for any $z$ with $y + z \in \Omega$,
  \[
    J(y + z)
    = J(y) + \sum_{i=1}^k \pdiff{J(y)}{y_i} z_i
    + \sum_{i,j=1}^k \pdiffII{J(y)}{y_i}{y_j} z_i z_j
    + \rho(|z|),
  \]
  where $\rho$ is some function satisfying $\epsilon^{-2} \rho(\epsilon) \to 0$ as $\epsilon \to 0$.
  Now consider what happens when we replace $z_i$ above with $Z_i = \psi(\epsilon\inr{X_i}{\Pi_i v})$ and
  take expectations. One easily checks that $\E Z_i = 0$, $\E \rho(|Z|) = o(\epsilon^2)$, and
  \[
    \E Z_i Z_j = \epsilon^2 (\Pi_i v)^T \E [X_i X_j] (\Pi_i v) + o(\epsilon^2);
  \]
  hence,
  \begin{align*}
    \E J(y + Z)
    &= J(y) + \epsilon^2 \sum_{i,j = 1}^k \pdiffII{J(y)}{y_i}{y_j} (\Pi_i v)^T \E [X_i X_j] (\Pi_i v) + o(\epsilon^2) \\
    &= J(y) + \epsilon^2 v^T (A \eltwise \expand_{d_1, \dots, d_k}(H_J(y))) v + o(\epsilon^2).
  \end{align*}
  On the other hand, $\E J(y + Z) = \E J(f_{1,\epsilon}(X_1), \dots, f_{k,\epsilon}(X_k))$,
  which is at least $J(y)$ according to~\eqref{eq:restricted-jensen-part-2-assumption}.
  Taking $\epsilon \to 0$ proves~\eqref{eq:restricted-jensen-part-2-goal}.
\end{proof}

\section{A short proof of Pr\'ekopa-Leindler inequality }

The Pr\'ekopa-Leindler inequality states that if $f, g, h: \R^d \to [0, \infty)$ satisfy
\[
  h(\lambda x + (1-\lambda) y) \ge f(x)^\lambda g(y)^{1-\lambda}
\]
for all $x, y \in \R^d$ and some $\lambda \in (0, 1)$ then
\[
  \E h \ge (\E f)^\lambda (\E g)^{1-\lambda},
\]
where expectations are taken with respect to the standard Gaussian measure on $\R^d$.
By applying a linear transformation, the standard Gaussian measure may be replaced by any
Gaussian measure; by taking a limit over Gaussian measures with large covariances, the expectations
may also be replaced by integrals with respect to the Lebesgue measure.

As M. Ledoux brought to our attention,
the Pr\'ekopa-Leindler inequality may be seen as a consequence of Theorem~\ref{thm:jensen};
we will present only the case $d=1$, but the case for general $d$ may be done in a similar way.
Alternatively, one may prove the Pr\'ekopa-Leindler inequality for $d=1$ first and then extend to
arbitrary $d$ using induction and Fubini's theorem.

Fix $\lambda \in (0, 1)$, let
$(X, Y) \sim \normal\big(0, \smash{\big(\begin{smallmatrix}1 & \rho \\ \rho & 1 \end{smallmatrix}\big)}\big)$
and let $Z = \lambda X + (1-\lambda) Y$. Let $\sigma^2 = \sigma^2(\rho, \lambda)$ be the variance
of $Z$ and let $A = A(\rho, \lambda)$ be the covariance of $(X, Y, Z)$.
Note that $A$ is a rank-two matrix, and that
it may be decomposed as $A = u u^T + v v^T$ where $u$ and $v$ are both orthogonal
to $(\lambda, 1-\lambda, -1)^T$.

For $\alpha, R \in \R_+$, define $J_{\alpha,R}: \R_+^3 \to \R$ by
\[
    J_{\alpha,R}(x, y, z) = (x^{\lambda} y^{1-\lambda} z^{-\alpha})^R.
\]

\begin{lemma}\label{lem:pl}
  For any $\lambda$ and $\rho$, and for any $\alpha < \sigma^2$,
  there exists $R \in \R_+$ such that $A \eltwise H_{J_{\alpha,R}} \psdge 0$.
\end{lemma}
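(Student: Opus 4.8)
The plan is to compute $H_{J_{\alpha,R}}$ by logarithmic differentiation and then show that, after a diagonal rescaling, the condition $A \eltwise H_{J_{\alpha,R}} \psdge 0$ collapses to a single, point-independent matrix inequality that holds for all large $R$.

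Write $J := J_{\alpha,R} = e^{Rg}$ with $g(x,y,z) = \lambda\log x + (1-\lambda)\log y - \alpha\log z$ on $\R_+^3$. Then $H_J = J\cdot\bigl(R^2\,\grad g\,(\grad g)^T + R\,H_g\bigr)$, where $\grad g = (\lambda/x,\,(1-\lambda)/y,\,-\alpha/z)$ and $H_g = \diag(-\lambda/x^2,\,-(1-\lambda)/y^2,\,\alpha/z^2)$. Since $A \eltwise (cM) = c\,(A\eltwise M)$ for scalars $c$ and $J > 0$ on $\R_+^3$, the inequality $A\eltwise H_J \psdge 0$ is equivalent to $R\,\bigl(A\eltwise \grad g\,(\grad g)^T\bigr) + A\eltwise H_g \psdge 0$. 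Now I would use the identity $A\eltwise vv^T = D_v A D_v$ with $D_v := \diag(v)$; as $\grad g$ has no zero coordinate on $\R_+^3$, the matrix $D_{\grad g}$ is invertible, and conjugating the last inequality by $D_{\grad g}^{-1}$ turns it into $R\,A + N \psdge 0$ with $N := D_{\grad g}^{-1}\,(A\eltwise H_g)\,D_{\grad g}^{-1}$. Because $H_g$ is diagonal and $A_{11} = A_{22} = 1$, $A_{33} = \sigma^2$, a short computation gives the \emph{point-independent} matrix $N = \diag\bigl(-1/\lambda,\,-1/(1-\lambda),\,\sigma^2/\alpha\bigr)$. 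Thus the lemma reduces to producing a single $R \in \R_+$ with $R\,A + N \psdge 0$.

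For this I would invoke the elementary fact that if $P \psdge 0$ has one-dimensional kernel $\mathrm{span}(e)$ and $S = S^T$ satisfies $e^T S e > 0$, then $R\,P + S \psdge 0$ for all sufficiently large $R$ — which follows from a Schur-complement argument (in a basis adapted to $\R e \oplus e^\perp$, the $e^\perp$-block of $RP+S$ is positive definite for large $R$ and its Schur complement tends to $e^T S e > 0$), or alternatively from compactness of the unit sphere. Since $A$ has rank two with $\ker A = \mathrm{span}\bigl((\lambda, 1-\lambda, -1)^T\bigr)$ — as recorded in the setup — it only remains to check $(\lambda, 1-\lambda, -1)\,N\,(\lambda, 1-\lambda, -1)^T = -\lambda - (1-\lambda) + \sigma^2/\alpha = \sigma^2/\alpha - 1$, which is positive precisely because $0 < \alpha < \sigma^2$ (here $\alpha > 0$ is part of the standing assumption $\alpha, R \in \R_+$). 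The computations in the first two steps are routine; the only point needing care is verifying that the point-dependent factors cancel after rescaling by $D_{\grad g}^{-1}$, so that the kernel direction of $A$ contributes exactly $\sigma^2/\alpha - 1$ — and the whole hypothesis $\alpha < \sigma^2$ is there to make this one number positive. I do not anticipate any genuine obstacle beyond this bookkeeping.
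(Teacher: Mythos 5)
Your proposal is correct and is essentially the paper's own argument: after your diagonal congruence, the point-independent condition $RA + N \psdge 0$ is exactly the paper's reduced inequality~\eqref{eq:goal} conjugated by $\diag(\lambda,1-\lambda,-\alpha)$, with the roles of $\ker A$ (spanned by $(\lambda,1-\lambda,-1)^T$) and $\ker(A\eltwise\theta\theta^T)$ (spanned by $(1,1,1/\alpha)^T$) corresponding under that congruence. Your final perturbation fact is precisely the paper's Lemma~\ref{lem:psd}, and the positivity on the kernel, $\sigma^2/\alpha - 1 > 0$, matches the paper's computation $w^T D w = 1 - \sigma^2/\alpha < 0$.
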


To see how the Pr\'ekopa-Leindler inequality
follows from Theorem~\ref{thm:jensen} and Lemma~\ref{lem:pl}, suppose that
$h(\lambda x + (1-\lambda)y) \ge f^\lambda(x) + g^{1-\lambda}(y)$
for all $x, y \in \R$. Then $J_{\alpha,R}(f(X), g(Y), h^{1/\alpha}(Z)) \le 1$
with probability one (because $Z = \lambda X + (1-\lambda) Y$ with probability one).
By Theorem~\ref{thm:jensen}, with the $R$ from Lemma~\ref{lem:pl} we have
\begin{align*}
  1 &\ge \E J_{\alpha,R}(f(X), g(Y), h(Z)) \\
    &\ge J_{\alpha,R} (\E f(X), \E g(Y), \E h(Z)) \\
    &= \left(
      \frac{(\E f(X))^\lambda (\E g(Y))^{1-\lambda}}
      {(\E h^{1/\alpha}(Z))^\alpha}
    \right)^R.
\end{align*}
In other words, $(\E h^{1/\alpha}(Z))^\alpha \ge (\E f)^\lambda (\E g)^{1-\lambda}$.
This holds for any $\rho$ and any $\alpha < \sigma^2$. By sending $\rho \to 1$, we
send $\sigma^2 \to 1$ and so we may take $\alpha \to 1$ also. Finally, note
that in this limit $Z$ converges in distribution to $\normal(0, 1)$. Hence, we recover
the Pr\'ekopa-Leindler inequality for the standard Gaussian measure.

\begin{proof}[Proof of Lemma~\ref{lem:pl}]
  By a computation,
\begin{align*}
  H_{J_{\alpha,R}} &= J_{\alpha,R}(x, y, z)
    \begin{pmatrix}
      \frac{\lambda R(\lambda R - 1)}{x^2} & \frac{\lambda R(1-\lambda) R}{xy} & -\frac{\lambda \alpha R^2}{xz} \\
      \frac{\lambda R(1-\lambda) R}{xy} & \frac{(1-\lambda)R((1-\lambda)R - 1)}{y^2} & -\frac{(1-\lambda)\alpha R^2}{yz} \\
      -\frac{\lambda \alpha R^2}{xz} & -\frac{(1-\lambda)\alpha R^2}{yz} & \frac{\alpha R(\alpha R+1)}{z^2}
    \end{pmatrix}.
\end{align*}
We would like to show that $A \eltwise H_J \psdge 0$; since elementwise multiplication
commutes with multiplication by diagonal matrices, it is enough to show that
\begin{equation}\label{eq:goal}
    A \eltwise \left(
      \begin{pmatrix}
        \lambda \\
        1-\lambda \\
        -\alpha
      \end{pmatrix}^{\otimes 2}
      - \frac{1}{R} \begin{pmatrix}
        \lambda & 0 & 0 \\
        0 & 1-\lambda & 0 \\
        0 & 0 & -\alpha
      \end{pmatrix}
    \right) \ge 0.
\end{equation}
Let $\theta = (\lambda, 1-\lambda, -\alpha)^T$ and recall that $A = u u^T + v v^T$, where
$u$ and $v$ are both orthogonal to $(\lambda, 1-\lambda -1)^T$. Then
\[
    A \eltwise (\theta \theta^T) = (u \eltwise \theta) (u \eltwise \theta)^T +
    (v \eltwise \theta) (v \eltwise \theta)^T,
\]
where $u \eltwise \theta$ and $v \eltwise \theta$ are both orthogonal to $(1, 1, \frac{1}{\alpha})^T$
(call this $w$).
In particular, $A \eltwise (\theta \theta^T)$ is a rank-two, positive semi-definite matrix whose null
space is the span of $w$.

On the other hand,
$A \eltwise \diag(\lambda, 1-\lambda, -\alpha) = \diag(\lambda, 1-\lambda, -\alpha \sigma^2)$
(call this $D$). Then $w^T D w = 1 - \sigma^2/\alpha < 0$. As a consequence of the following Lemma,
\[
  A \circ (\theta \theta^T) - \frac 1R D \ge 0
\]
for all sufficiently large $R$.
\end{proof}

\begin{lemma}\label{lem:psd}
  Let $A$ be a positive semi-definite matrix and let $B$ be a symmetric matrix. If
  $u^T B u \ge \delta |u|^2$ for all $u \in \ker(A)$ and
  $v^T A v \ge \delta |v|^2$ for all $v \in \ker(A)^\perp$ then
  $A + \epsilon B \psdge 0$ for all $0 \le \epsilon \le \frac{\delta^2}{\|B\|^2 + \delta\|B\|}$,
  where $\|B\|$ is the operator norm of $B$.
\end{lemma}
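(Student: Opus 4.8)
The plan is to decompose an arbitrary unit vector $x$ into its components in $\ker(A)$ and $\ker(A)^\perp$, write $x = u + v$ with $u \in \ker(A)$, $v \in \ker(A)^\perp$, and estimate $x^T(A + \epsilon B)x$ from below term by term. Since $Au = 0$, we have $x^T A x = v^T A v \ge \delta |v|^2$ by the second hypothesis. For the perturbation term, $x^T B x = u^T B u + 2 u^T B v + v^T B v \ge \delta |u|^2 - 2\|B\|\,|u|\,|v| - \|B\|\,|v|^2$, using the first hypothesis on $u$ together with Cauchy--Schwarz and the operator-norm bound on the cross and pure-$v$ terms. Combining,
\[
  x^T(A + \epsilon B)x \ge \delta |v|^2 + \epsilon\big(\delta |u|^2 - 2\|B\|\,|u|\,|v| - \|B\|\,|v|^2\big).
\]

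The main point is then to choose $\epsilon$ so that the right-hand side is nonnegative for all $|u|, |v| \ge 0$. Writing $a = |u|$, $b = |v|$, we need $\epsilon \delta a^2 - 2\epsilon\|B\| ab + (\delta - \epsilon\|B\|) b^2 \ge 0$ as a quadratic form in $(a,b)$; this holds provided the leading coefficient $\epsilon\delta$ is nonnegative (automatic) and the discriminant condition $(\epsilon\|B\|)^2 \le \epsilon\delta(\delta - \epsilon\|B\|)$ is satisfied. Rearranging, this is $\epsilon^2\|B\|^2 + \epsilon^2 \delta\|B\| \le \epsilon\delta^2$, i.e.\ $\epsilon(\|B\|^2 + \delta\|B\|) \le \delta^2$, which is exactly the stated bound $\epsilon \le \frac{\delta^2}{\|B\|^2 + \delta\|B\|}$. (The case $\|B\| = 0$ is trivial, as is $\delta \le 0$, though presumably $\delta > 0$ is intended; one should note the bound degenerates gracefully.)

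I expect no serious obstacle here: the only mild subtlety is making sure the quadratic-form-in-$(a,b)$ argument is valid even though $a, b$ are constrained to be nonnegative reals rather than arbitrary reals — but nonnegativity of a quadratic form on all of $\R^2$ certainly implies it on the positive quadrant, so the discriminant criterion is sufficient (if not strictly necessary). One should also double-check the edge case where the quadratic in $b$ has negative leading coefficient, i.e.\ $\epsilon\|B\| > \delta$; but the discriminant condition forces $\epsilon\|B\| \le \epsilon\delta^2/(\|B\|+\delta) \cdot \|B\|/\delta^2 \cdot \ldots$ — more simply, $\epsilon \le \delta^2/(\|B\|^2+\delta\|B\|) \le \delta/\|B\|$, so $\epsilon\|B\| \le \delta$ automatically and the coefficient $\delta - \epsilon\|B\|$ is nonnegative, consistent with the form being PSD.
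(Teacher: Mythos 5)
Your proof is correct and takes essentially the same route as the paper: decompose $w = u + v$ along $\ker(A)$ and $\ker(A)^\perp$, use the hypotheses plus operator-norm bounds on the $B$-terms, and read off nonnegativity of the resulting quadratic in $(|u|,|v|)$ from the discriminant condition $\epsilon(\|B\|^2 + \delta\|B\|) \le \delta^2$, which is exactly the paper's condition $(\delta - \epsilon\|B\|)\delta \ge \epsilon\|B\|^2$. Your additional remarks (that $\epsilon \le \delta/\|B\|$ so the $|v|^2$-coefficient stays nonnegative, and the trivial cases $\|B\|=0$, $\delta\le 0$) are fine refinements of the same argument, not a different method.
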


\begin{proof}
  Any vector $w$ may be decomposed as $w = u + v$ with $u \in \ker(A)$ and $v \in \ker(A)^\perp$. Then
  \begin{align*}
    w^T (A + \epsilon B) w
    &= u^T A u + \epsilon u^T B u - 2 \epsilon u^T B v + \epsilon v^T B v \\
    &\ge \delta |u|^2 - \epsilon \|B\| |u|^2 - 2 \epsilon \|B\| |u| |v| + \epsilon \delta |v|^2.
  \end{align*}
  Considering the above expression as a quadratic polynomial in $|u|$ and $|v|$, we see that
  it is non-negative whenever $(\delta - \epsilon \|B\|) \delta \ge \epsilon \|B\|^2$.
\end{proof}

We remark that the preceding proof of the Pr\'ekopa-Leindler inequality may be extended in
an analogous way to prove Barthe's inequality~\cite{Ba1}.

\section{Proof of Ehrhard's inequality}

The parallels between the Pr\'ekopa-Leindler and Ehrhard inequalities become
obvious when they are both written in the following form.
The version of Pr\'ekopa-Leindler that we proved above may
be restated to say that
\begin{equation}\label{eq:pl}
\left.\begin{gathered}
  \exp(R (\lambda \log f(X) + (1-\lambda) \log g(Y) - \alpha \log h(Z))) \le 0
  \text{ a.s.} \\
  \text{implies} \\
  \exp(R (\lambda \log \E f(X) + (1-\lambda) \log \E g(Y) - \alpha \log \E h(Z))) \le 0.
\end{gathered}\right\}
\end{equation}
On the other hand, here we will prove that
\begin{equation}\label{eq:ehrhard}
\left.\begin{gathered}
  \Phi\left(R (\lambda \Phi^{-1}(f(X)) + (1-\lambda) \Phi^{-1}(g(Y))
  - \sigma \Phi^{-1}(h(Z)))\right) \le 0 \text{ a.s.} \\
  \text{implies} \\
  \Phi\left(R (\lambda \Phi^{-1}(\E f(X)) + (1-\lambda) \Phi^{-1}(\E g(Y))
  - \sigma \Phi^{-1}(\E h(Z)))\right) \le 0.
\end{gathered}\right\}
\end{equation}
(It may not yet be clear why the $\alpha$ in~\eqref{eq:pl} has become $\sigma$ in~\eqref{eq:ehrhard};
this turns out to be the right choice, as will become clear from the example in Section~\ref{sec:ehrhard-example}.)
This implies Ehrhard's inequality in the same way that~\eqref{eq:pl} implies
the Pr\'ekopa-Leindler inequality.
In particular, our proof of~\eqref{eq:pl} suggests
a strategy for attacking~\eqref{eq:ehrhard}: define the function
\[
J_R(x, y, z)
= \Phi\left(R (\lambda \Phi^{-1}(x) + (1-\lambda) \Phi^{-1}(y)
  - \sigma \Phi^{-1}(z))\right).
\]
(We will drop the parameter $R$ when it can be inferred
from the context.) In analogy with our proof of Pr\'ekopa-Leindler, we might then try to
show that for sufficiently large $R$, $A \eltwise H_{J_R} \psdge 0$. Unfortunately, this is false.

\subsection{An example}\label{sec:ehrhard-example}
Recall from the proof of Theorem~\ref{thm:restricted-jensen} that if
$A \eltwise H_J \psdge 0$ then
\[
G_{s, t, R}(x, y) :=
P_{t-s}^A J_R(P_s^1 f(x), P_s^1 g(y), P_s^{\sigma^2} h(\lambda x + (1-\lambda) y))
\]
is non-increasing in $s$ for every $x$ and $y$. We will give an example in which $G_{s,t,R}$ 
may be computed explicitly and it clearly fails to be non-increasing.

From now on, define $f_s = P_s^1 f$, $g_s = P_s^1 g$ and $h_s = P^{\sigma^2}_s h$.
Let $f(x) = 1_{\{x \le a\}}$,
$g(y) = 1_{\{y \le b\}}$ and $h(z) = 1_{\{z \le c\}}$, where
$c \ge \lambda a + (1-\lambda) b$. A direct computation yields
\begin{align*}
  f_s(x) &= \Phi\left(\frac{a - e^{-s} x}{\sqrt{1-e^{-2s}}}\right) \\
  g_s(y) &= \Phi\left(\frac{b - e^{-s} y}{\sqrt{1-e^{-2s}}}\right) \\
  h_s(z) &= \Phi\left(\frac{c - e^{-s} z}{\sigma \sqrt{1-e^{-2s}}}\right).
\end{align*}
Hence,
\[
J(f_s(x), g_s(y), h_s(\lambda x + (1-\lambda y)))
= \Phi\left(R\frac{\lambda a + (1-\lambda) b - c}{\sqrt{1-e^{-2s}}}\right).
\]
If $c > \lambda a + (1-\lambda)b$ then the above quantity is increasing in $s$.
Since it is also independent of $x$ and $y$, it remains unchanged when applying
$P_{t-s}^A$. That is,
\[
G_{s, t, R}
= \Phi\left(R\frac{\lambda a + (1-\lambda) b - c}{\sqrt{1-e^{-2s}}}\right)
\]
is increasing in $s$. On the bright side, in this example $G_{s,r,R\sqrt{1-e^{-2s}}}$
is constant. Since Theorem~\ref{thm:jensen} was not built to consider such behavior,
we will adapt it so that the function $J$
is allowed to depend on $s$.

\subsection{Allowing $J$ to depend on $t$}\label{sec:jensen-time-varying}

Recalling the notation of \S2, we assume from now on that $\Omega_i \subseteq [0, 1]$ for each $i$.
Then $A$ is a $k \times k$ matrix; let
$\sigma_1^2, \dots, \sigma_k^2$ be its diagonal elements.
We will consider functions of the form $J: \Omega \times [0, \infty] \to \R$. We write
$H_J$ for the Hessian matrix of $J$ with respect to the variables in $\Omega$, and $\pdiff J t$
for the partial derivative of $J$ with respect to the variable in $[0, \infty]$.
Let $I: [0, 1] \to \R$ be the function $I(x) = \phi(\Phi^{-1}(x))$.


\begin{lemma}\label{lem:time-varying-jensen}
  With the notation above, suppose that $J: \Omega \times [0, \infty] \to \R$ is bounded and $\calC^2$,
  and take $(X_1, \dots, X_k) \sim \gamma_A$.
  Let $\lambda_1, \dots, \lambda_k$ be non-negative numbers with $\sum_i \lambda_i = 1$, let $D(x)$
  be the $k \times k$ diagonal matrix with $\lambda_i \sigma_i^2 / I^2(x_i)$ in position $i$,
  and take some $\epsilon \ge 0$.
  If $\pdiff J t(x, t) \le 0$ and
  \begin{equation}
    A \eltwise \hess{J}(x, t) - (e^{2(t+\epsilon)} - 1) \pdiff{J(x,t)}{t} D^2 \psdge 0
    \label{eq:time-varying-hess}
  \end{equation}
  for every $x \in \Omega$ and $t > 0$ then
  for every $k$-tuple of measurable functions $f_i: \R \to \Omega_i$,
  \begin{equation}\label{eq:time-varying-jensen}
    \E J(P_\epsilon^{\sigma_1} f_1(X_1), \dots, P_\epsilon^{\sigma_k} f_k(X_k), 0)
    \ge J(\E f_1(X_1), \dots, \E f_k(X_k), \infty).
  \end{equation}
\end{lemma}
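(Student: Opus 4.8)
The plan is to mimic the interpolation argument from the proof of Theorem~\ref{thm:restricted-jensen}, but with a reparametrized time variable so that the $s$-dependence in the example of Section~\ref{sec:ehrhard-example} is absorbed into the extra argument of $J$. Concretely, set $g_i = f_i \circ \Pi_i$ and define the interpolated quantity
\[
  G_{s,t}(x) = P_{t-s}^A\, J\!\left( P_{s+\epsilon}^{\sigma_1} g_1(x), \dots, P_{s+\epsilon}^{\sigma_k} g_k(x),\ \tau(s) \right),
\]
where $\tau(s)$ is a time-change chosen so that $\tau(0) = 0$ at $s=0$ and $\tau(s) \to \infty$ as $s \to \infty$; the factor $e^{2(t+\epsilon)}-1$ appearing in~\eqref{eq:time-varying-hess} is exactly what one gets by differentiating $P_{s+\epsilon}^{\sigma} g_i$ and comparing variances, so I expect the correct choice to satisfy $\tau'(s) = \bigl(e^{2(s+\epsilon)}-1\bigr)^{-1}$ or a close variant (this is one of the routine computations to pin down). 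Note that $P_{s+\epsilon}^{\sigma_i} g_i(x)$ depends only on $\Pi_i x$, so $\grad P_{s+\epsilon}^{\sigma_i} g_i$ is supported on the $i$th block and the Hessian term will contract against $A \eltwise \hess{J}$ just as in Theorem~\ref{thm:restricted-jensen}.

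Next I would differentiate $G_{s,t}$ in $s$. Using the commutation formula~\eqref{eq:commutation} (suitably adapted to the $s$-dependent argument, which contributes an extra $\pdiff{J}{t}\cdot\tau'(s)$ term via the chain rule), one gets
\[
  \pdiff{}{s} G_{s,t} = -P_{t-s}^A\!\left[ \sum_{i,j} \partial_i\partial_j J \,\inr{\grad P_{s+\epsilon}^{\sigma_i} g_i}{A\,\grad P_{s+\epsilon}^{\sigma_j} g_j} \ +\ \tau'(s)\,\pdiff{J}{t} \right].
\]
The gradient terms on the diagonal carry a factor governed by $I^2$ (since $\grad P_{s+\epsilon}^{\sigma} g_i$, after the relevant substitution, produces the density-type weight $\sigma_i^2/I^2(\cdot)$ up to the time factor $(e^{2(s+\epsilon)}-1)^{-1}$), and with the choice of $\tau$ above the bracket becomes exactly $\inr{\grad u}{\bigl(A\eltwise \hess{J} - (e^{2(s+\epsilon)}-1)\pdiff{J}{t} D^2\bigr)\grad u}$ up to a positive scalar, where $D$ is the diagonal matrix of the Lemma evaluated at the current arguments. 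Hypothesis~\eqref{eq:time-varying-hess} (with $t$ replaced by $s$) then makes this quadratic form non-negative, so $\pdiff{}{s}G_{s,t}\le 0$: $G_{s,t}$ is non-increasing in $s$. Finally I would read off the inequality by comparing endpoints: at $s=0$ we get $G_{0,t}(0) = P_t^A J(P_\epsilon^{\sigma_1}f_1,\dots,P_\epsilon^{\sigma_k}f_k, 0)$ evaluated at $0$, which tends to $\E J(P_\epsilon^{\sigma_1}f_1(X_1),\dots,0)$ as $t\to\infty$; and sending $s\to\infty$ first (using $P_{s+\epsilon}^{\sigma_i}g_i \to \E f_i(X_i)$, $\tau(s)\to\infty$, and stationarity of $\gamma_A$) gives the right-hand side $J(\E f_1,\dots,\E f_k,\infty)$. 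Monotonicity then yields~\eqref{eq:time-varying-jensen}. As in Theorem~\ref{thm:restricted-jensen}, standard approximation lets me assume $f_i$ is continuous and vanishes at infinity so that all the $P_s$-limits are uniform and the differentiations are justified.

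The main obstacle I anticipate is getting the time-change $\tau$ and the accompanying variance bookkeeping exactly right: one must verify that differentiating $P_{s+\epsilon}^{\sigma_i} g_i(x)$ in $s$ produces precisely the weight $\sigma_i^2/I^2$ against the second-derivative term and precisely the factor $(e^{2(s+\epsilon)}-1)$ needed to match~\eqref{eq:time-varying-hess}, so that the cross term and the $\pdiff{J}{t}$ term assemble into the single matrix inequality rather than leaving a residual. The hypothesis $\pdiff{J}{t}\le 0$ is presumably needed to handle the case where the arguments $x_i$ hit the boundary of $[0,1]$ (where $I(x_i)=0$ and $D$ blows up): there the $D^2$ term has a favorable sign and should be controllable by a limiting/truncation argument, but making that rigorous — or arguing that the relevant $f_i$ never push the iterated semigroup to the boundary — is the delicate point. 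Everything else is a direct transcription of the Theorem~\ref{thm:restricted-jensen} argument.
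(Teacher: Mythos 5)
Your overall skeleton --- define an interpolated quantity $G_{s,t}$, differentiate in $s$, use the matrix hypothesis to get monotonicity, and compare endpoints --- is the same as the paper's, but there is a genuine gap at the heart of the argument: the step where you claim that differentiating $P_{s+\epsilon}^{\sigma_i} g_i$ ``produces precisely the weight $\sigma_i^2/I^2$ \dots and precisely the factor $(e^{2(s+\epsilon)}-1)$'' so that the bracket ``becomes exactly'' the quadratic form in the hypothesis. No such identity exists. The $\pdiff{J}{t}$ term enters $\pdiff{}{s}G_{s,t}$ as a scalar, not contracted against gradients, and the only way to compare it with the matrix term is the Bakry--Ledoux pointwise gradient bound~\cite{BL}: for $[0,1]$-valued $f$ one has $|\grad P_{s}^{\sigma^2} f| \le \sigma^{-1}(e^{2s}-1)^{-1/2}\, I(P_s^{\sigma^2} f)$. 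This is an inequality (attained only in the half-space case), and it is exactly here that the hypothesis $\pdiff{J}{t}\le 0$ is used: writing $f_{i,s}=P_{s+\epsilon}^{\sigma_i^2}f_i$ and $v_s=\grad f_s$, the bound gives $v_s^T D(f_s) v_s = \sum_i \lambda_i \bigl(\sigma_i |f_{i,s}'| / I(f_{i,s})\bigr)^2 \le (e^{2(s+\epsilon)}-1)^{-1}$, and since $-\pdiff{J}{s}\ge 0$ one may lower-bound the scalar term $-\pdiff{J}{s}$ by $-(e^{2(s+\epsilon)}-1)\pdiff{J}{s}\, v_s^T D(f_s) v_s$, after which \eqref{eq:time-varying-hess} makes the integrand pointwise non-negative. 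Your guess that $\pdiff{J}{t}\le 0$ is needed for boundary behaviour of $I$ shows this mechanism has been missed; without the Bakry--Ledoux bound the scalar and matrix terms cannot be assembled and the proof does not close.

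A secondary problem is the time change $\tau$, which is both unnecessary and inconsistent with the hypothesis as stated. In the paper the fourth argument of $J$ along the flow is simply $s$, i.e.\ $G_{s,t} = P^A_{t-s-\epsilon} J(P^{\sigma_1^2}_{s+\epsilon} f_1, \dots, P^{\sigma_k^2}_{s+\epsilon} f_k, s)$, and the condition needed is then \eqref{eq:time-varying-hess} evaluated at $t=s$. If you insert a nontrivial $\tau$, the condition you would actually need is $A \eltwise \hess{J}(x,\tau(s)) - \tau'(s)\,(e^{2(s+\epsilon)}-1)\,\pdiff{J}{t}(x,\tau(s))\, D \psdge 0$, which matches the stated hypothesis only when $\tau'(s)(e^{2(s+\epsilon)}-1) = e^{2(\tau(s)+\epsilon)}-1$, forcing $\tau(s)=s$. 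Moreover your proposed $\tau'(s) = (e^{2(s+\epsilon)}-1)^{-1}$ yields a bounded $\tau$ when $\epsilon>0$, contradicting your own requirement $\tau(s)\to\infty$, which you need in order to obtain the endpoint $J(\E f_1(X_1),\dots,\E f_k(X_k),\infty)$.
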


Note that Lemma~\ref{lem:time-varying-jensen} has an extra parameter $\epsilon \ge 0$ compared
to our previous versions of Jensen's inequality. This is for convenience when applying Lemma~\ref{lem:time-varying-jensen}:
when $\epsilon > 0$ then the function $e^{2(t + \epsilon)} - 1$
is bounded away from zero, which makes~\eqref{eq:time-varying-hess}
easier to check.

\begin{proof}
  Write $f_{i,s}$ for $P_{s+\epsilon}^{\sigma_i^2} f_i$ and $f_s = (f_{1,s}, \dots, f_{k,s})$.
  Define
  \[
    G_{s,t} = P_{t-s-\epsilon}^A J(f_{1,s}, \dots, f_{k,s}, s).
  \]
  We differentiate in $s$, using the commutation formula~\eqref{eq:commutation}.
  Compared to the proof of Theorem~\ref{thm:restricted-jensen},
  an extra term appears because the function $J$ itself depends on $s$:
  \begin{align*}
    - \pdiff{}{s} G_{s,t}
    &= P_{t-s-\epsilon} \sum_{i,j=1}^k \partial_i \partial_j J(f_s, s) A_{ij} f'_{i,s} f'_{j,s} - P_{t-s-\epsilon} \pdiff{J}{s}(f_s, s) \\
    &= P_{t-s-\epsilon} v_s^T (A \eltwise H_J(f_s,s)) v_s - P_{t-s-\epsilon} \pdiff{J}{s}(f_s,s),
  \end{align*}
  where $v_s = \grad f_s$.
  Bakry and Ledoux~\cite{BL} proved that
  $|v_{i,s}| \le \sigma_i^{-1} (e^{2(s+\epsilon)} - 1)^{-1/2} I(f_{i,s})$.
  Hence, 
  \[
    v_s^T D(f_s) v_s = \sum_{i=1}^k \lambda_i \left(\frac{\sigma_i |v'_{i,s}|}{I(f_{i,s})}\right)^2 \le (e^{2(s+\epsilon)} - 1)^{-1},
  \]
  and so
  \[
    - \pdiff{}{s} G_{s,t}
    \ge P_{t-s} \left(v_s^T (A \eltwise H_J(f_s,s)) v_s - (e^{2(s+\epsilon)} - 1) \pdiff{J}{s}(f_s,s) v_s^T D(f_s) v_s\right).
  \]
  Clearly, the argument of $P_{t-s}$ is non-negative pointwise if
  \[
    A \eltwise H_{J_R}(x,s) \psdge (e^{2(s+\epsilon)} - 1) \pdiff{J_R}{s}(x,s) D(x)
  \]
  for all $x, s$. In this case, $G_{s,t}$ is non-increasing in $s$ and we conclude as in
  the proof of Theorem~\ref{thm:restricted-jensen}.
\end{proof}

By combining the ideas of Theorem~\ref{thm:restricted-jensen} and Lemma~\ref{lem:time-varying-jensen},
we obtain the following combined version.

\begin{corollary}\label{cor:combined-jensen}
  With the notation and assumptions of Lemma~\ref{lem:time-varying-jensen}, suppose in addition
  that $\{x \in \Omega: J(x, 0) < a\}$ is connected, that $\pdiff{J(x,t)}{t} \le 0$ whenever $J(x, t) < a$,
  and that
  \[
    A \eltwise \hess{J}(x, t) - (e^{2(t+\epsilon)} - 1) \pdiff{J(x,t)}{t} D^2 \psdge 0
  \]
  for every $t \ge 0$ and every $x$ such that $J(x, t) < a$. Then for every $k$-tuple of measurable
  functions $f_i: \R \to \Omega_i$ satisfying $J(P_\epsilon^{\sigma_1} f_1, \dots, P_{\epsilon}^{\sigma_k} f_k) < 0$,
  \[
    \E J(P_\epsilon^{\sigma_1} f_1(X_1), \dots, P_\epsilon^{\sigma_k} f_k(X_k), 0) \ge J(\E f_1(X_1), \dots, \E f_k(X_k), \infty).
  \]
\end{corollary}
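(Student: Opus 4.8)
The plan is to merge the two arguments we have already seen: the localization-by-connectedness trick from the proof of Theorem~\ref{thm:restricted-jensen}, and the time-varying differential inequality from Lemma~\ref{lem:time-varying-jensen}. As in both proofs, set $f_{i,s} = P_{s+\epsilon}^{\sigma_i^2} f_i$, $f_s = (f_{1,s}, \dots, f_{k,s})$, and $G_{s,t} = P_{t-s-\epsilon}^A J(f_{1,s}, \dots, f_{k,s}, s)$. By the same standard approximation arguments as in Theorem~\ref{thm:restricted-jensen} (using that $\{x \in \Omega : J(x,0) < a\}$ is connected to produce a pointwise-approximating sequence), it suffices to treat $f = (f_1, \dots, f_k)$ continuous, vanishing at infinity, with $J(f_\epsilon, 0) \le a - \delta$ pointwise for some $\delta > 0$, where $f_\epsilon = (P_\epsilon^{\sigma_1} f_1, \dots, P_\epsilon^{\sigma_k} f_k)$; one then sends $a \to 0$ or rather works with the shifted function $J - a$ as the remark after Theorem~\ref{thm:restricted-jensen} permits, so that WLOG $a = 0$.

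The core is a bootstrap in $s$ exactly as in Theorem~\ref{thm:restricted-jensen}. First I would observe that $G_{0,0} = J(f_\epsilon, 0) \le -\delta$ pointwise. By uniform continuity of $J$ on the (bounded) range of $s \mapsto (f_s, s)$ over a compact $s$-interval — using that $P_s^{\sigma_i^2} g \to g$ uniformly and that $s \mapsto J(\cdot, s)$ is uniformly continuous — there is $\eta > 0$ such that $|G_{s,s} - G_{r,r}| < \delta$ whenever $|s - r| \le \eta$. Then, assuming inductively that $G_{r,r} \le -\delta$ for a fixed $r$, we get $G_{s,s} < 0$ for $s \in [r, r+\eta]$; on this range the hypotheses apply (since $J(f_s, s) < 0$), so both $\pdiff{J}{t}(f_s, s) \le 0$ and the matrix inequality hold, and the computation from Lemma~\ref{lem:time-varying-jensen} — combining the commutation formula~\eqref{eq:commutation} with the Bakry--Ledoux gradient bound $|v_{i,s}| \le \sigma_i^{-1}(e^{2(s+\epsilon)}-1)^{-1/2} I(f_{i,s})$ — yields $-\pdiff{}{s} G_{s, r+\eta} \ge 0$ pointwise for $s \in [r, r+\eta]$. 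Since $G_{\cdot, r+\eta}$ is continuous and $G_{r, r} \le -\delta$, this forces $G_{s,s} \le -\delta$ throughout $[r, r+\eta]$, closing the induction step; hence $G_{s,s} \le -\delta$ for all $s \ge 0$.

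Having established that $G_{s,s} < 0$ for every $s$, the matrix inequality and the sign condition on $\pdiff{J}{t}$ hold along the whole trajectory, so by the Lemma~\ref{lem:time-varying-jensen} computation $G_{s,t}$ is non-increasing in $s$ for every fixed $t$. Evaluating at $x = 0$ and letting $t \to \infty$ gives
\[
  \E J(P_\epsilon^{\sigma_1} f_1(X_1), \dots, P_\epsilon^{\sigma_k} f_k(X_k), 0)
  = \lim_{t \to \infty} G_{0,t}(0)
  \ge \lim_{t \to \infty} G_{t,t}(0)
  = J(\E f_1, \dots, \E f_k, \infty),
\]
where the last equality uses $P_t^A \to \E$ and stationarity as in the earlier proofs, together with $f_{i,t} = P_{t+\epsilon}^{\sigma_i^2} f_i \to \E f_i$. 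Finally, undoing the $\epsilon$-approximation and the shift by $a$ recovers the stated inequality.

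The main obstacle I anticipate is purely bookkeeping rather than conceptual: one must check that the two ``extra terms'' — the $\pdiff{J}{t}$ term from time-dependence and the negative curvature allowed on the sublevel set — interact correctly, i.e.\ that the precise matrix hypothesis $A \eltwise \hess{J}(x,t) - (e^{2(t+\epsilon)}-1)\pdiff{J(x,t)}{t} D^2 \psdge 0$ on $\{J < 0\}$ is exactly what makes $-\pdiff{}{s}G_{s,t} \ge 0$ once the gradient bound is inserted, with the factor $(e^{2(s+\epsilon)}-1)$ matching the Bakry--Ledoux estimate; and that the localization argument is compatible with the Ornstein--Uhlenbeck smoothing at level $\epsilon$ (so that $f_\epsilon$, not $f$ itself, is what must lie in the open sublevel set), which is why the corollary is stated with $J(P_\epsilon^{\sigma_1} f_1, \dots) < 0$ as the hypothesis. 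Both points are already implicit in the two proofs being combined, so the proof is essentially a transcription.
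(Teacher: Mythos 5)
Your proposal is correct and matches the paper's intent exactly: the paper gives no separate proof of this corollary, stating only that it follows ``by combining the ideas of Theorem~\ref{thm:restricted-jensen} and Lemma~\ref{lem:time-varying-jensen},'' and your argument is precisely that combination (the connectedness/approximation reduction and the $\delta$-bootstrap in $s$ from Theorem~\ref{thm:restricted-jensen}, with the derivative of $G_{s,t}$ controlled via the commutation formula, the Bakry--Ledoux gradient bound, and the time-dependent matrix hypothesis as in Lemma~\ref{lem:time-varying-jensen}). The minor glosses you make (shifting $a$ to $0$, the harmless $\epsilon$-offset in the definition of $G_{s,t}$ when taking $t\to\infty$) are at the same level of detail as the paper's own treatment.
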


\subsection{The Hessian of $J$}
Define $J_R: (0, 1)^3 \to 0$ by
\[
  J_R(x, y, z)
  = \Phi\left(R \big(\lambda \Phi^{-1}(x) + (1-\lambda) \Phi^{-1}(y)
  - \sigma \Phi^{-1}(z))\big)\right).
\]
Let $H_J = H_J(x, y, z)$ denote the $3 \times 3$ Hessian matrix of $J$;
let $A$ be the $3 \times 3$ covariance
matrix of $(X, Y, Z)$. In order to apply Corollary~\ref{cor:combined-jensen},
we will compute the matrix $A \eltwise H_J$.
First, we define some abbreviations: set
\begin{align*}
  u &= \Phi^{-1}(x) & \Xi &= \lambda u + (1-\lambda) v - \sigma w \\
  v &= \Phi^{-1}(y) & \theta &= (\lambda, 1-\lambda, -\sigma)^T \\
  w &= \Phi^{-1}(z) & \calI &= \diag(\phi(u), \phi(v), \phi(w))
\end{align*}
We will use a subscript $s$ to denote that any of the above quantities is
evaluated at $(f_s, g_s, h_s)$ instead of $(x, y, z)$. That is
$u_s = \Phi^{-1}(f_s)$, $\Xi_s = \lambda u_s + (1-\lambda) v_s - \sigma w_s$,
and so on.

\begin{lemma}\label{lem:hess}
  $\displaystyle
      H_J = \phi(R\Xi) \calI^{-1} \left(
        R \diag(\lambda u, (1-\lambda)v, -\sigma w) - R^3 \Xi \theta \theta^T
      \right) \calI^{-1}.
  $
\end{lemma}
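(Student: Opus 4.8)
The plan is to obtain the formula by a direct chain-rule computation, using only the elementary identities $\phi'(t) = -t\phi(t)$ and, writing $u(x) = \Phi^{-1}(x)$, $u'(x) = 1/\phi(u(x))$ (the latter from differentiating $\Phi(u(x)) = x$), and similarly for $v(y) = \Phi^{-1}(y)$ and $w(z) = \Phi^{-1}(z)$.

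First I would record the first- and second-order data of the inner function. Set $p = p(x,y,z) = R\Xi = R(\lambda u + (1-\lambda)v - \sigma w)$, so that $J_R = \Phi\circ p$. Then $\partial_x p = R\lambda/\phi(u)$, $\partial_y p = R(1-\lambda)/\phi(v)$, $\partial_z p = -R\sigma/\phi(w)$; in matrix form $\grad p = R\,\calI^{-1}\theta$. Since $p$ is a sum of functions of one coordinate each, its mixed partials vanish and $H_p$ is diagonal; using $\partial_x[1/\phi(u)] = u/\phi(u)^2$ (which follows from $\phi' = -t\phi$ and $u' = 1/\phi(u)$) together with the analogous identities in $y$ and $z$, one finds
\[
  H_p = R\,\calI^{-1}\diag(\lambda u,\,(1-\lambda)v,\,-\sigma w)\,\calI^{-1}.
\]

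Next I would apply the chain rule for the Hessian of a scalar composition: for $J_R = \Phi\circ p$,
\[
  H_{J_R} = \Phi''(p)\,(\grad p)(\grad p)^T + \Phi'(p)\,H_p = \phi(p)\big(H_p - p\,(\grad p)(\grad p)^T\big),
\]
using $\Phi' = \phi$ and $\Phi''(p) = \phi'(p) = -p\,\phi(p)$. Substituting $\grad p = R\,\calI^{-1}\theta$ (so $p\,(\grad p)(\grad p)^T = R\Xi\cdot R^2\,\calI^{-1}\theta\theta^T\calI^{-1}$) and the formula for $H_p$ above, and factoring $\calI^{-1}$ out of both terms on the left and right, yields
\[
  H_{J_R} = \phi(R\Xi)\,\calI^{-1}\big(R\diag(\lambda u,\,(1-\lambda)v,\,-\sigma w) - R^3\Xi\,\theta\theta^T\big)\,\calI^{-1},
\]
which is the claimed identity.

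There is no genuine obstacle here beyond bookkeeping; the two points to watch are that $u$ depends only on $x$ (so the correction terms $u/\phi(u)^2$ contribute only on the diagonal of $H_p$, never off it) and that the factor $R^3\Xi$ in the rank-one term comes from one factor of $R\Xi = p$ times two factors of $\grad p$. An equivalent route, if one prefers to avoid the composition formula, is to differentiate $\partial_x J_R = R\lambda\,\phi(R\Xi)/\phi(u)$ (and its $y$- and $z$-analogues) once more by the product rule and read off the six distinct entries of the symmetric matrix $H_{J_R}$ directly; the two computations agree.
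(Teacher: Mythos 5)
Your proposal is correct and follows essentially the same route as the paper: both are direct chain-rule computations of $H_{J_R}$ using $\Phi'=\phi$, $\phi'(t)=-t\phi(t)$ and $\tfrac{d}{dx}\Phi^{-1}(x)=1/\phi(\Phi^{-1}(x))$, the only difference being that you package the bookkeeping through the matrix identity $H_{\Phi\circ p}=\Phi'(p)H_p+\Phi''(p)(\grad p)(\grad p)^T$ with $\grad p=R\,\calI^{-1}\theta$, whereas the paper computes the entries $\partial_x\Phi(R\Xi)$, $\partial^2_x\Phi(R\Xi)$, and $\partial_x\partial_y\Phi(R\Xi)$ one by one and then assembles the matrix. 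The alternative route you mention at the end is precisely the paper's proof, and the two computations agree.
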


\begin{proof}
Noting that $\diff{u}{x} = 1/\phi(u)$,
the chain rule gives
\[
  \diff{}{x} \Phi(R \Xi) =
  R \lambda \frac{\phi(R \Xi)}{\phi(u)}
  = R \lambda \exp\left( -\frac{R^2 \Xi^2 - u^2}{2} \right).
\]
Differentiating again,
\[
  \diffII{}{x}{x} \Phi(R\Xi)
  = R \lambda (u - R^2 \Xi \lambda) \frac{\phi(R\Xi)}{\phi^2(u)}.
\]
For cross-derivatives,
\[
  \diffII{}{x}{y} \Phi(R\Xi)
  = - R^3 \Xi \lambda(1-\lambda) \frac{\phi(R\Xi)}{\phi(u)\phi(v)}.
\]
Putting these together with the analogous terms involving differentiation
by $z$,
\begin{multline*}
  \frac{H_J}{\phi(R\Xi)} =
  - R^3\Xi \begin{pmatrix}
    \frac{\lambda^2}{\phi^2(u)} & \frac{\lambda(1-\lambda)}{\phi(u)\phi(v)} & -\frac{\lambda \sigma}{\phi(u)\phi(w)} \\
    \frac{\lambda(1-\lambda)}{\phi(u)\phi(v)} & \frac{(1-\lambda)^2}{\phi^2(v)} & -\frac{(1-\lambda)\sigma}{\phi(v)\phi(w)} \\
    -\frac{\lambda\sigma}{\phi(u)\phi(w)} & -\frac{(1-\lambda)\sigma}{\phi(u)\phi(v)} & \frac{\sigma^2}{\phi^2(w)}
  \end{pmatrix} \\
  + R \begin{pmatrix}
    \frac{\lambda u}{\phi^2(u)} & 0 & 0 \\
    0 & \frac{(1-\lambda) v}{\phi^2(v)} & 0 \\
    0 & 0 & -\frac{\sigma w}{\phi^2(w)}
  \end{pmatrix}.
\end{multline*}
Recalling the definition of $\calI$ and $\theta$, this may
be rearranged into the claimed form.
\end{proof}

Having computed $H_J$, we need to examine $A \odot H_J$. Recall that $A$
is a rank-two matrix and so it may be decomposed as $A = a a^T + b b^T$. Moreover,
the fact that $Z = \lambda X + (1-\lambda) Y$ means that $a$ and $b$
are both orthogonal to $(\lambda, 1-\lambda, -1)^T$. Recalling the definition
of $\theta$, this implies that $a \eltwise \theta$ and $b \eltwise \theta$
are both orthogonal to $(1, 1, \sigma^{-1})^T$.
This observation allows us to deal with the $\theta \theta^T$ term in
Lemma~\ref{lem:hess}:
\[
A \eltwise \theta \theta^T = (a a^T) \eltwise (\theta \theta^T) + (b b^T) \eltwise (\theta \theta^T)
= (a \eltwise \theta)^{\otimes 2} + (b \eltwise \theta)^{\otimes 2}.
\]
To summarize:

\begin{lemma}
  The matrix $B := A \eltwise \theta \theta^T$ is positive semidefinite and has rank two.
  Its kernel is the span of $(1, 1, \frac 1\sigma)^T$.
\end{lemma}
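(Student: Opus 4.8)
The plan is to read everything off the decomposition $A \eltwise \theta\theta^T = (a\eltwise\theta)^{\otimes 2} + (b\eltwise\theta)^{\otimes 2}$ recorded just above the statement, where $A = aa^T + bb^T$. Since $B$ is thereby written as a sum of two rank-one positive semidefinite matrices, $w^T B w = \inr{a\eltwise\theta}{w}^2 + \inr{b\eltwise\theta}{w}^2 \ge 0$ for every $w \in \R^3$, so $B \psdge 0$ and $\mathrm{rank}(B) \le 2$. It then remains only to show that the rank is exactly two and to identify the kernel.

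For the rank, I would first note that $a$ and $b$ are linearly independent: the covariance matrix $A$ annihilates $(\lambda, 1-\lambda, -1)^T$ because $Z = \lambda X + (1-\lambda) Y$ almost surely, while its upper-left $2\times 2$ block is the nondegenerate covariance of $(X, Y)$, so $A$ has rank exactly two and hence $a, b$ span $\ker(A)^\perp$. The coordinatewise-multiplication map $v \mapsto v \eltwise \theta = \diag(\theta) v$ is invertible, since every coordinate of $\theta = (\lambda, 1-\lambda, -\sigma)^T$ is nonzero ($\lambda \in (0,1)$, and $\sigma^2 = \mathrm{Var}(Z) > 0$ when $\rho > -1$); an invertible linear map preserves linear independence, so $a \eltwise \theta$ and $b \eltwise \theta$ are linearly independent, and $B$ — the sum of the two rank-one matrices they generate — has rank exactly two.

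Finally, $\ker(B) = \{w : \inr{a\eltwise\theta}{w} = \inr{b\eltwise\theta}{w} = 0\}$ is the orthogonal complement of $\mathrm{span}(a\eltwise\theta, b\eltwise\theta)$, hence one-dimensional; and, as already observed before the statement, the orthogonality of $a$ and $b$ to $(\lambda, 1-\lambda, -1)^T$ gives $\inr{a\eltwise\theta}{(1, 1, \tfrac1\sigma)^T} = \lambda a_1 + (1-\lambda) a_2 - a_3 = 0$ and likewise for $b$, so $(1, 1, \tfrac1\sigma)^T$ lies in $\ker(B)$ and spans it by the dimension count. There is no genuine obstacle here: the statement merely repackages the computation already done, and the only extra input is the invertibility of $\diag(\theta)$ together with a one-line dimension count; the sole point deserving a word of care is the nondegeneracy of $A$ and the positivity of $\sigma$, both of which hold throughout the relevant range $\rho \in (-1,1)$.
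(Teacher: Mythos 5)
Your proof is correct and follows essentially the same route as the paper, which derives the lemma directly from the decomposition $A \eltwise \theta\theta^T = (a\eltwise\theta)^{\otimes 2} + (b\eltwise\theta)^{\otimes 2}$ and the orthogonality of $a\eltwise\theta$, $b\eltwise\theta$ to $(1,1,\sigma^{-1})^T$ in the paragraph preceding the statement. Your only addition is to spell out the exact-rank-two step (nondegeneracy of the $(X,Y)$ block plus invertibility of $\diag(\theta)$), which the paper leaves implicit.
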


On the other hand, the diagonal entries of $A$ are $1, 1,$ and $\sigma^2$; hence,
\[
A \eltwise \diag(\lambda u, (1-\lambda)v, -\sigma w)
= \diag(\lambda u, (1-\lambda)v, - \sigma^3 w) =: D.
\]
Combining this with Lemma~\ref{lem:hess}, we have
\begin{equation}\label{eq:A-hess}
A \eltwise H_J = R \phi(R\Xi) \calI^{-1}
(D - R^2 \Xi B)
\calI^{-1}.
\end{equation}

Consider the expression above in the light of our earlier proof of
Pr\'ekopa-Leindler.  Again, we have a sum of two matrices ($D$ and $-R^2 \Xi B$),
one of which is multiplied by a factor ($R^2$) that we may take to be large.
There are two important differences.
The first is that the matrix $D$ (whose analogue was constant in the proof of Pr\'ekopa-Leindler) cannot be
controlled pointwise in terms of $B$. This difference is closely related to
the example in Section~\ref{sec:ehrhard-example}; we will solve it by making $J$
depend on $t$ in the right way; the $\diff{J}{t}$ term in Corollary~\ref{cor:combined-jensen}
will then cancel out part of $D$'s contribution.

The second difference is
that in~\eqref{eq:A-hess}, the term that is multiplied by a large factor
(namely, $-\Xi B$) is not everywhere positive semi-definite because there exist
$(x, y, z) \in \R^3$ such that $\Xi(x, y, z) > 0$. This is the reason that we consider
the ``restricted'' formulation of Jensen's inequality in Theorem~\ref{thm:restricted-jensen}
and Corollary~\ref{cor:combined-jensen}.

\subsection{Adding the dependence on $t$}
Recall that $X$ and $Y$ have variance 1 and covariance $\rho$, that $Z = \lambda X + (1-\lambda) Y$,
and that $A$ is the covariance of $(X, Y, Z)$.
Recall also the notations $u, v, w, \Xi$, and their subscripted variants.
For $R > 0$, define $r(t) = R \sqrt{1-e^{-2t - \epsilon}}$ and
\begin{align}
  J_R(x, y, z, t)
  &= \Phi\left(r(t) \big(\lambda \Phi^{-1}(x) + (1-\lambda) \Phi^{-1}(y)
  - \sigma \Phi^{-1}(z))\big)\right) \notag \\
  &= \Phi(r(t) \Xi). \label{eq:J-def}
\end{align}
Let $E = \diag(\lambda, 1-\lambda, \sigma) / (1 + \sigma^{-1})$.

\begin{lemma}\label{lem:combined-jensen-condition}
  Define $\Omega_\epsilon = [\Phi(-1/\epsilon), \Phi(1/\epsilon)]^3$.
  For every $\rho, \lambda$, and $\epsilon$, there exists $R > 0$ such that
  \[
    A \eltwise H_J - (e^{2(t+\epsilon)} - 1) \pdiff{J}{t} \calI^{-1} E \calI^{-1} \psdge 0
  \]
  on $\{(x, t) \in \Omega_\epsilon \times [0, \infty): \Xi(x) \le -\epsilon\}$.
\end{lemma}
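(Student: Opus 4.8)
The plan is to compute the left-hand side explicitly and reduce the claim to the positive-semidefiniteness of a matrix that, after stripping off the conjugation by $\calI^{-1}$, is a sum of a "bounded" symmetric matrix and a large multiple of a rank-two positive semidefinite matrix $B$. First I would record the two ingredients. From~\eqref{eq:A-hess}, with $r = r(t) = R\sqrt{1-e^{-2t-\epsilon}}$ playing the role of the old parameter, we have $A\eltwise H_J = r\,\phi(r\Xi)\,\calI^{-1}(D - r^2\Xi B)\calI^{-1}$, where $D = \diag(\lambda u,(1-\lambda)v,-\sigma^3 w)$ and $B = A\eltwise\theta\theta^T$ is the rank-two PSD matrix with kernel spanned by $(1,1,\tfrac1\sigma)^T$. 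Second, I would compute $\pdiff{J}{t}$: since $J = \Phi(r(t)\Xi)$ and $r'(t) = R^2 e^{-2t-\epsilon}/r(t)$, we get $\pdiff{J}{t} = \phi(r\Xi)\,\Xi\,r'(t)$, so $(e^{2(t+\epsilon)}-1)\pdiff{J}{t} = \phi(r\Xi)\,\Xi\cdot\frac{R^2 e^{-2t-\epsilon}(e^{2(t+\epsilon)}-1)}{r(t)}$. A direct simplification gives $\frac{R^2 e^{-2t-\epsilon}(e^{2(t+\epsilon)}-1)}{r(t)^2} = \frac{e^{-2t-\epsilon}(e^{2(t+\epsilon)}-1)}{1-e^{-2t-\epsilon}} = 1$ — this is precisely the identity that the definition $r(t) = R\sqrt{1-e^{-2t-\epsilon}}$ was engineered to produce — so $(e^{2(t+\epsilon)}-1)\pdiff{J}{t} = r^2\,\phi(r\Xi)\,\Xi / R^2 \cdot R^2$; more cleanly, dividing everything by the common factor $r\phi(r\Xi)$, the displayed condition becomes equivalent to
\[
  \calI^{-1}\big(D - r^2\Xi B\big)\calI^{-1} - r\,\Xi\,\calI^{-1} E \calI^{-1} \psdge 0,
\]
and since conjugation by the invertible matrix $\calI^{-1}$ preserves the PSD order, it suffices to show $D - r\Xi E - r^2\Xi B \psdge 0$ on the region $\Xi \le -\epsilon$.

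Next I would apply Lemma~\ref{lem:psd} with the roles $A \leftrightarrow -r^2\Xi B$ (PSD because $\Xi \le -\epsilon < 0$) and $B \leftrightarrow D - r\Xi E$. On $\ker(-r^2\Xi B) = \ker B = \mathrm{span}\{w\}$ where $w = (1,1,\tfrac1\sigma)^T$, I need $w^T(D - r\Xi E)w \ge \delta|w|^2$ for some $\delta > 0$ uniform over the region. Compute $w^T D w = \lambda u + (1-\lambda)v - \sigma^3 w \cdot \tfrac1{\sigma^2} = \lambda u + (1-\lambda)v - \sigma w = \Xi$ (here I am abusing $w$ for both the vector and the scalar $\Phi^{-1}(z)$; I would rename one of them in the final write-up), and $w^T E w = (\lambda + (1-\lambda) + \sigma\cdot\tfrac1{\sigma^2})/(1+\sigma^{-1}) = (1+\sigma^{-1})/(1+\sigma^{-1}) = 1$. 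Thus $w^T(D - r\Xi E)w = \Xi - r\Xi\cdot 1 = \Xi(1 - r)$, which is $\ge \epsilon(r-1) \ge \epsilon/2 \cdot |w|^2 \cdot(\text{const})$ once $r = R\sqrt{1-e^{-2t-\epsilon}} \ge 2$, i.e.\ once $R$ is large enough (using $\sqrt{1-e^{-2\epsilon}} > 0$). This is exactly why the lemma allows us to choose $R$ large. On $\ker(B)^\perp$ I need a lower bound $v^T(-r^2\Xi B)v \ge \delta|v|^2$; since $B$ is PSD with two-dimensional range, its smallest positive eigenvalue $\mu(x) > 0$ depends continuously on $x \in \Omega_\epsilon$ through $\calI$ and $\theta$, and $\Omega_\epsilon$ is compact, so $\mu$ is bounded below by some $\mu_0 > 0$; together with $-\Xi \ge \epsilon$ and $r^2 \ge R^2(1-e^{-2\epsilon})$ this gives a lower bound growing like $R^2$. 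Finally the operator norm $\|D - r\Xi E\|$ is at most $\mathrm{const}\cdot(1 + r|\Xi|)$; on $\Omega_\epsilon$ both $|u|,|v|,|w|$ and $|\Xi|$ are bounded by $1/\epsilon + \sigma/\epsilon$, and $r \le R$, so $\|D - r\Xi E\| \le C_\epsilon R$. Plugging into Lemma~\ref{lem:psd}, the required inequality $\epsilon \le \delta^2/(\|B\|^2 + \delta\|B\|)$ reads, with $\epsilon_{\mathrm{Lem}} = 1$ (we want the full matrix, i.e.\ $\epsilon_{\mathrm{Lem}}=1$ in that lemma's normalization — alternatively rescale), roughly $1 \lesssim (R^2\text{-sized }\delta)^2/(R^2\text{-sized }\|B\|)^2$, which fails to be automatic; so instead I would keep the scaling explicit: write the matrix as $M_0 + R^2 N$ where $N = -(1-e^{-2t-\epsilon})\Xi B \succcurlyeq c\,\mathrm{Id}$ on $\ker B^\perp$ with $c = c(\epsilon,\sigma) > 0$ and $N$ has bounded norm, $M_0 = D - R\sqrt{1-e^{-2t-\epsilon}}\,\Xi E$ has $\|M_0\| \le C_\epsilon R$ and $w^T M_0 w \ge$ (positive, $\sim R$) on $\ker N$; then apply Lemma~\ref{lem:psd} to $R^2 N + M_0 = R^2(N + R^{-2}M_0)$, i.e.\ with $\delta \sim \min(c, R^{-1})$ and perturbation norm $\sim R^{-1}$, so the condition becomes $R^{-1} \lesssim \delta^2/(\|\cdot\|^2 + \delta\|\cdot\|) \sim R^{-2}/R^{-1} = R^{-1}$, which holds for $R$ large with room to spare.

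The main obstacle is bookkeeping the two different scalings: the $\theta\theta^T$ piece is multiplied by $r^2 \sim R^2$ while the dangerous part of the diagonal (the $-\sigma w$ entry of $D$ and the cross-term $-r\Xi E$) scales only like $R$ or $1$, and one must verify that the engineered identity $(e^{2(t+\epsilon)}-1)\pdiff{J}{t}\cdot r^{-2}R^2 = R^2$, i.e.\ the cancellation producing the clean form $D - r\Xi E - r^2\Xi B$, really goes through with the factor $E = \diag(\lambda,1-\lambda,\sigma)/(1+\sigma^{-1})$ chosen so that $E$ acts as the identity on the vector $w = (1,1,\tfrac1\sigma)^T$. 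Once that algebraic miracle is confirmed, the rest is a compactness argument on $\Omega_\epsilon$ (to get uniform positive lower bounds on the smallest positive eigenvalue of $B$ and uniform upper bounds on $|u|,|v|,|w|,|\Xi|$) followed by a direct application of Lemma~\ref{lem:psd} with the scaling made explicit as above. The restriction to $\Xi \le -\epsilon$ is used in exactly two places: it makes $-\Xi B$ genuinely positive definite on $\ker B^\perp$, and it makes $w^T(D - r\Xi E)w = \Xi(1-r)$ positive and bounded away from zero for $r > 1$.
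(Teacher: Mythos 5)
Your overall strategy (strip the conjugation by $\calI^{-1}$, reduce to a matrix inequality involving $D$, $E$ and the rank-two matrix $B$, and finish with Lemma~\ref{lem:psd} for $R$ large) is the same as the paper's, but the central algebraic step is wrong, and the error lands exactly where positivity is needed. The ``engineered identity'' you invoke is false: with $r(t)=R\sqrt{1-e^{-2t-\epsilon}}$ one has
\[
  \frac{e^{-2t-\epsilon}\bigl(e^{2(t+\epsilon)}-1\bigr)}{1-e^{-2t-\epsilon}}
  \;=\;\frac{e^{2(t+\epsilon)}-1}{e^{2t+\epsilon}-1}
  \;=\;\frac{e^{\epsilon}-e^{-2t-\epsilon}}{1-e^{-2t-\epsilon}}
  \;\ge\;1+\delta(\epsilon)\;>\;1,
\]
because the factor in Corollary~\ref{cor:combined-jensen} carries the exponent $2(t+\epsilon)=2t+2\epsilon$ while differentiating $r$ produces $2t+\epsilon$; your cancellation would require these to coincide. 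Consequently $(e^{2(t+\epsilon)}-1)\pdiff{J}{t}=\kappa\,r(t)\,\Xi\,\phi(r\Xi)$ with a \emph{bounded} factor $\kappa\ge 1+\delta$, and the correct reduced target is $D-\kappa\,\Xi E-r^2\Xi B\psdge 0$, not $D-r\,\Xi E-r^2\Xi B$ (even your own ``$=1$'' identity would give coefficient $1$, not $r$, on the $\Xi E$ term). This matters precisely on $\ker B=\mathrm{span}\{w_0\}$, $w_0=(1,1,\sigma^{-1})^T$: you correctly get $w_0^T D w_0=\Xi$ and $w_0^T E w_0=1$, so $w_0^T(D-\kappa\Xi E)w_0=-(\kappa-1)\Xi$. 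Under your exact cancellation this is $0$, so Lemma~\ref{lem:psd} has no $\delta>0$ to work with and the argument stalls; the positive quantity $\Xi(1-r)\ge\epsilon(r-1)$ in your write-up comes entirely from the spurious factor $r$. The true source of positivity on $\ker B$ is the deliberate mismatch of exponents ($e^{-2t-\epsilon}$ in the definition of $r(t)$ versus $e^{2(t+\epsilon)}-1$ in the hypothesis), which yields the uniform margin $-(\kappa-1)\Xi\ge\delta\epsilon>0$ on the region $\Xi\le-\epsilon$; largeness of $R$ is used only to overwhelm the bounded perturbation via the $r^2\epsilon B$ term.

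Once this is repaired, the rest of your outline collapses to the paper's argument and simplifies: on $\Omega_\epsilon$ the perturbation $D-\kappa\Xi E$ has norm bounded by a constant depending only on $\epsilon$ (not by $C_\epsilon R$, contrary to your estimate), the matrix $B$ is a fixed rank-two positive semidefinite matrix depending only on $\rho$ and $\lambda$ (it does not depend on $x$, so no compactness argument over $\Omega_\epsilon$ is needed for its smallest positive eigenvalue), and Lemma~\ref{lem:psd} gives $\epsilon B+\eta(D-\kappa\Xi E)\psdge 0$ for some fixed small $\eta>0$; choosing $R$ with $R^2(1-e^{-\epsilon})\ge 1/\eta$, so that $r^2(t)\ge 1/\eta$ for all $t\ge 0$, completes the proof.
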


\begin{proof}
  We computed $A \eltwise H_J$ in~\eqref{eq:A-hess} already; applying that formula
  and noting that $\calI^{-1} \psdge 0$, it suffices to show that
  \[
    r(t) \phi(r(t) \Xi) (D - r^2(t) \Xi B) - (e^{2(t+\epsilon)} - 1) \pdiff{J}{t} E \psdge 0
  \]
  whenever $\Xi \le -\epsilon$.
  (Recall that $D = \diag(\lambda u, (1-\lambda) v, -\sigma^3 w)$, and that $B$ is a rank-two
  positive semidefinite matrix that depends only on $\rho$ and $\lambda$, and whose kernel is
  the span of $(1, 1, \sigma^{-1})^T$). We compute
  \[
    \pdiff{J}{t} = r'(t) \Xi \phi(r(t) \Xi) = \frac{r(t)}{e^{2t+\epsilon} - 1} \Xi \phi(r(t) \Xi).
  \]
  Now, there is some $\delta = \delta(\epsilon) > 0$ such that
  \[
    \frac{e^{2(t+\epsilon)} - 1}{e^{2t + \epsilon} - 1} \ge 1 + \delta
  \]
  for all $t \ge 0$. For this $\delta$,
  \begin{multline*}
    r(t) \phi(r(t) \Xi) (D - r^2(t) \Xi B) - (e^{2(t+\epsilon)} - 1) \pdiff{J}{t} E \\
    \psdge
    r(t) \phi(r(t) \Xi) (D - (1+\delta) \Xi E - r^2(t) \Xi B);
  \end{multline*}
  Hence, it suffices to show that $D - (1 + \delta) \Xi E - r^2(t) \Xi B \psdge 0$.
  Since $\Xi \le -\epsilon$, it suffices to show that
  $r^2(t) \epsilon B + D - (1 + \delta) \Xi E \psdge 0$.
  Now, $B$ is a rank-two positive semi-definite matrix depending only on $\lambda$ and $\rho$.
  Its kernel is spanned by $\theta = (1, 1, \sigma^{-1})^T$. Note that $\theta^T D \theta = \Xi$
  and $\theta^T E \theta = 1$. Hence,
  \[
    \theta^T (D - (1 + \delta) \Xi E) \theta = - \delta \Xi \ge \delta \epsilon > 0.
  \]

  Next, note that we can bound the norm of $D - (1 + \delta) \Xi E$ uniformly:
  on $\Omega_\epsilon$, $\|D\| \le 1/\epsilon$
  and $|\Xi| \le 2/\epsilon$. All together, if we assume (as we may) that $\delta \le 1$ then
  $\|D + (1 + \delta) \Xi E\| \le 5 / \epsilon$. By Lemma~\ref{lem:psd}, if $\eta > 0$
  is sufficiently small then
  \[
    \epsilon B + \eta(D - (1 + \delta) \Xi E) \psdge 0.
  \]
  To complete the proof, choose $R$ large enough so that $R^2 (1-e^{\epsilon}) \ge 1/\eta$;
  then $r^2(t) \ge 1/\eta$ for all $t$.
\end{proof}

Finally, we complete the proof of~\eqref{eq:ehrhard} by a series of simple approximations.
First, let $C_a$ denote the set of continuous functions $\R \to [0, 1]$ that converge to $a$ at $\pm \infty$,
and note that it suffices to prove~\eqref{eq:ehrhard} in the case that $f, g \in C_0$ and $g \in C_1$. Indeed,
any measurable $f, g: \R \to [0, 1]$ may be approximated (pointwise at $\gamma_1$-almost every point)
from below by functions in $C_0$, and any measurable $h: \R \to [0, 1]$ may be approximated from above by functions in $C_1$.
If we can prove~\eqref{eq:ehrhard} for these approximations, then it follows (by the dominated convergence theorem)
for the original $f, g$, and $h$.

Now consider $f, g \in C_0$ and $h \in C_1$ satisfying $\Xi(f, g, h) \le 0$ pointwise. For $\delta > 0$, define
\begin{align*}
  f_\delta &= \Phi(-1/\delta) \lor f \land \Phi(1/(3\delta)) \\
  g_\delta &= \Phi(-1/\delta) \lor g \land \Phi(1/(3\delta)) \\
  h_\delta &= \Phi \left( - \frac{1}{3\delta} \lor (\Phi^{-1}(h) + \delta) \land \frac{1}{\delta} \right).
\end{align*}
If $\delta > 0$ is sufficiently small then $\Xi(f_\delta, g_\delta, h_\delta) \le -\delta$ pointwise;
moreover, $f_\delta, g_\delta$, and $h_\delta$ all take values in $[\Phi(-1/\delta), \Phi(1/\delta)]$, are
continuous, and have limits at $\pm \infty$. Since $f_\delta \to f$ as $\delta \to 0$ (and similarly for $g$ and $h$),
it suffices to show that
\begin{equation}\label{eq:fdelta}
  \lambda \Phi^{-1}(\E f_\delta) + (1-\lambda) \Phi^{-1}(\E g_\delta) \le \sigma \Phi^{-1}(\E h_\delta)
\end{equation}
for all sufficiently small $\delta > 0$.

Since $f_\delta$ has limits at $\pm \infty$,
it follows that $P_\epsilon f_\delta \to f_\delta$ uniformly
as $\epsilon \to 0$ (similarly for $g_\delta$ and $h_\delta$). By taking $\epsilon$ small enough (at least as
small as $\delta/2$), we can ensure
that $\Xi (P^1_\epsilon f_\delta, P^1_\epsilon g_\delta, P^{\sigma^2}_\epsilon h_\delta) < -\epsilon$ pointwise.
Now we apply Corollary~\ref{cor:combined-jensen} with $\Omega_i = [\Phi(-1/\epsilon), \Phi(1/\epsilon)]$,
the function $J$ defined in~\eqref{eq:J-def}, $a = \frac 12$, and
with $(\lambda_1, \lambda_2, \lambda_3) = (\lambda, 1-\lambda, \sigma^{-1})/(1 + \sigma^{-1})$.
Lemma~\ref{lem:combined-jensen-condition} implies that the condition of Corollary~\ref{cor:combined-jensen}
is satisfied. We conclude that
\begin{align*}
  \frac 12
  &\ge J_R(\E f_\delta, \E g_\delta, \E h_\delta, \infty) \\
  &= \Phi\left(
    R\big(\lambda \Phi^{-1}(\E f_\delta) + (1-\lambda) \Phi^{-1} (\E g_\delta) - \sigma \Phi^{-1} (\E h_\delta)\big)
  \right),
\end{align*}
which implies~\eqref{eq:fdelta} and completes the proof of~\eqref{eq:ehrhard}.

\section*{Acknowledgements}

We thank F. Barthe and M. Ledoux for helpful comments and for directing them to related literature.

We would also like to thank R. van Handel for pointing out to us that~\eqref{eq:ehrhard} corresponds more
directly to a generalized form of Ehrhard's inequality contained in Theorem 1.2 of~\cite{Borell3}.

\bigskip

\bigskip


\footnotesize
\bibliographystyle{amsplain}

\bigskip

\noindent \textsc{Joe Neeman}

\noindent \textsc{Institute of Applied Mathematics}

\noindent \textsc{University of Bonn}

\noindent \textsc{Bonn, 53113 Germany}

\noindent \textit{E-mail:} {\tt joeneeman@gmail.com}

\bigskip

\noindent \textsc{Grigoris Paouris}

\noindent \textsc{Department of Mathematics}

\noindent \textsc{Texas A \& M University}

\noindent \textsc{College Station, TX 77843 U.S.A.}

\noindent \textit{E-mail:} {\tt grigorios.paouris@gmail.com}, {\tt grigoris@math.tamu.edu}

\vspace*{1in}

\small
\end{document}